\theoremstyle{remark}
\newtheorem{remark}{Remark}
\theoremstyle{definition}
\newtheorem{definition}{Definition}[section]
\theoremstyle{plain}
\newtheorem{proposition}[definition]{Proposition}
\newtheorem{theorem}[definition]{Theorem}
\newtheorem{corollary}[definition]{Corollary}
\newtheorem{lemma}[definition]{Lemma}
 \def\ocirc#1{\ifmmode\setbox0=\hbox{$#1$}\dimen0=\ht0
  \advance\dimen0 by1pt\rlap{\hbox to\wd0{\hss\raise\dimen0
  \hbox{\hskip.2em$\scriptscriptstyle\circ$}\hss}}#1\else {\accent"17 #1}\fi}
\def\et{\quad\text{and}\quad}
\def\bdot{\:.\:}
\def\np{{n+1}}
\def\ip{{i+1}}
\def\ipp{{i+1/2}}
\def\imm{{i-1/2}}
\def\leq{\leqslant}
\def\geq{\geqslant}
\def\R{\mathbb{R}}
\def\N{\mathbb{N}}
\def\Z{\mathbb{Z}}
\def\S{\mathbb{S}}
\def\S{\mathscr{S}}
\def\U{\mathscr{U}}
\def\M{\mathscr{M}}
\def\Ns{\mathscr{N}}
\def\T{\mathscr{T}}
\def\G{\mathscr{G}}
\def\ds{\displaystyle}
\def\epsilon{\varepsilon}
\def\phi{\varphi}
\def\dv{\partial}
\def\gd{\nabla_x}
\def\Dd{\mathrm{D}}
\def\D{\mathscr{D}}
\def\dm{\mathrm{d}}
\def\W{{\rm\bf W}}
\def\L{{\rm\bf L}}
\def\Cz{\mathcal{C}_c^\infty}
\def\Cc{\mathscr{C}}
\def\div{\mathop{{\rm div}_x}}
\def\BV{{\rm {BV}}}
\def\un{\mathbf{1}}
\def\qandq{\quad\text{and}\quad}
\newcommand{\sumi}[0]{\sum_{i=1}^{d}}
\renewcommand{\eqref}[1]{(\ref{#1})}
\newcommand{\eqrefs}[2]{(\ref{#1}-\ref{#2})}
\title{Stability of stationary solutions \\ of singular systems of balance laws}
\author{Nicolas Seguin\footnote{Irmar (UMR 6625), Universit\'e de Rennes 1,
263 avenue du G\'en\'eral Leclerc, CS 74205, 35042 RENNES Cedex, France. (\url{nicolas.seguin@univ-rennes1.fr})}}
\date{}
\begin{document}

\maketitle

% =================================================================
% Abstract
% =================================================================

\begin{abstract}
The stability of stationary solutions of first-order systems of PDE's are considered. They may include some singular geometric terms, leading to discontinuous flux and non-conservative products. Based on several examples in Fluid Mechanics, we assume that these systems are endowed with a partially convex entropy. We first construct an associated relative entropy which allows to compare two states which share the same geometric data. This way, we are able to prove the stability of some stationary states within entropy weak solutions. This result applies for instance to the shallow-water equations with bathymetry. Besides, this relative entropy can be used to study finite volume schemes which are entropy-stable and well-balanced, and due to the numerical dissipation inherent to these methods, asymptotic stability of discrete stationary solutions is obtained. This analysis does not make us of any specific definition of the non-conservative products, applies to non-strictly hyperbolic systems, and is fully multidimensional with unstructured meshes for the numerical methods.
\end{abstract}

\noindent \textbf{Key-words.} Hyperbolic systems, stationary state, stability, relative entropy, non-conservative systems, finite volume schemes, well-balanced schemes.

\medskip

\noindent \textbf{2010 MSC.} 35L60, 35B35, 35B25, 65M08.

% =================================================================
% Table of contents
% =================================================================

\tableofcontents

% =================================================================
% Section. Introduction
% =================================================================
\section{Introduction}
% =================================================================

In this paper, we consider non-conservative systems in $d$ space dimensions of the form
\begin{align}
  \label{eq:sys-ncvu}
  &\dv_t u + \div f(u,\alpha) + \sumi s_i(u,\alpha) \dv_i \alpha = 0 , \\
  \label{eq:sys-ncva}
  &\dv_t \alpha = 0 ,
\end{align}
where $\div=\sumi\dv_i$, $\dv_i$ denotes the partial derivative with respect to $x_i$. We note $f=(f_i)_{i=1,\dots,d}$, and we have
\begin{align*}
 \alpha&\colon \R^+\times\R^d \to \R, & f_i&\colon \Omega\times\R \to \R^N, \\
 u&\colon \R^+\times\R^d \to \Omega, & s_i&\colon \Omega\times\R \to \R^N,
\end{align*}
where $\Omega$ is a convex subset of $\R^N$, the so-called \emph{set of admissible states}. Equation~\eqref{eq:sys-ncva} means that $\alpha$ is time-independent, so that this variable is a data, as soon as an initial condition is associated with \eqrefs{eq:sys-ncvu}{eq:sys-ncva}:
\begin{equation}
  \label{eq:ic} 
  \begin{cases}
    u(0,x) = u_0(x)  \\
    \alpha(0,x) = \alpha(x) 
  \end{cases}
  \text{for } x\in\R^d .
\end{equation}
Therefore, if $\alpha$ is smooth, the third term of the left-hand side of~\eqref{eq:sys-ncvu} can be considered as a source term. However, the analysis of the present paper also applies to non-smooth $\alpha$, and the term $\sum s_i(u,\alpha) \dv_i \alpha$ is a \emph{non-conservative product}. It also applies to the case of systems of conservation laws with discontinuous flux, $f$ being dependent on $\alpha$, as studied for instance in \cite{isaa-temp:nonl-res}, and also in \cite{klingenberg95:_convex} and \cite{AKR} in the scalar case (these two references are only two instances of a huge literature on this subject).

We assume that this system is endowed by an entropy pair $(\eta,F)$, which depends on $(u,\alpha)$ and satisfies the following assumptions:
\begin{enumerate}[label=(H\arabic*)]
\item \label{item:H1} The function $\eta=\eta(u,\alpha)\in\Cc^2(\Omega\times\R,\R)$ is convex with respect to its first variable and there exist two positive constants $\underline\eta<\overline\eta$ such that
\begin{equation}
 \label{eq:eta2}
 \sigma(\dv_u^2 \eta) \subset [\underline\eta,\overline\eta] \text{ on } \Omega\times\R,
\end{equation}
where $\sigma$ denotes the matrix spectrum. 
\item \label{item:H2} There exists an entropy flux $F=(F_i(u,\alpha))_{i=1,\dots,d}$ such that
\begin{equation}
  \forall i=1,\dots,d, \quad \dv_u \eta \, \dv_u f_i = \dv_u
  F_i \text{ and } \dv_u \eta \, (\dv_\alpha f_i+s_i) = \dv_\alpha F_i.
\end{equation}
\end{enumerate}

Since system~\eqrefs{eq:sys-ncvu}{eq:sys-ncva} is non-conservative, the products $s_i\dv_i\alpha$ are not defined for weak solutions, and generalised theories are invoked, see for instance \cite{colombeau} or \cite{dalm-lef-mur:non-cons}. We do not need here to provide a particular definition of weak solutions, we mainly impose that these solutions satisfy the entropy inequality
\begin{equation}
  \label{eq:entncvin}
  \dv_t \eta(u,\alpha) + \div F(u,\alpha) \leq 0 ,
\end{equation}
which becomes an equality for smooth solutions because of~\ref{item:H2}. Since the left hand side of this inequality is in a conservative form, it is well defined for weak solutions.

\begin{remark}
 The convexity assumption~\eqref{eq:eta2} is assumed on the whole space $\Omega\times\R$. It may be restrictive and, in order to deal with more general case, one could restrict the discussion of this paper to some neighborhood of a constant state of $\Omega\times\R$ without any major change.
\end{remark}

The issue addressed in this work is the role of the entropy inequality~\eqref{eq:entncvin} for the stability analysis of non-conservative systems of the form~\eqrefs{eq:sys-ncvu}{eq:sys-ncva}, and more precisely, the nonlinear stability of stationary solutions of~\eqrefs{eq:sys-ncvu}{eq:sys-ncva}. In applications, they are very important since they may serve not only as initial conditions (before being perturbed by a particular event on the domain or by a modification of the boundary conditions), but also as solutions which can be reached in the long time limit. Our aim is two-fold: 
\begin{itemize}
\item determine stationary solutions which are stable,
\item study the stability of numerical schemes when computing these stationary solutions. 
\end{itemize}

As we will see later, several systems of interest enter in our framework: shallow-water equations with bathymetry, gas dynamics in porous media, one-dimensional gas dynamics in a nozzle\dots Let us focus on the first example in this introduction. When the influence of the bathymetry is taken into account, the shallow-water equations write
\begin{equation}
  \label{eq:sw}
  \begin{cases}
    \partial_t h + \div (hv) = 0 \\
    \partial_t (hv) + \div (hv\otimes v) + \gd (gh^2/2) + gh \gd \alpha = 0 \\
    \partial_t \alpha = 0
  \end{cases}
\end{equation}
where $h$ is the height of water, $v$ is the average horizontal velocity, $g$ the gravity constant, and $\alpha$ the altitude of the ground. The most considered stationary states correspond to the ``lake at rest'' case, i.e.
\begin{equation}
  \label{eq:sw-ss}
  \begin{cases}
     \gd (\bar h+\alpha) = 0 \\
     \bar v = 0
  \end{cases}
\end{equation}
where $\bar h$ and $\bar v$ only depend on $x$. We first prove that stationary solutions~\eqref{eq:sw-ss} are (nonlinearly) stable in the class of entropy weak solutions of~\eqref{eq:sw}, the bathymetry $\alpha$ being given. Let us emphasize that this stability holds even for non-smooth bathymetry, $\alpha\in\BV$ for instance, and is independent of the definition of the non-conservative product $h \gd \alpha$. On the other hand, we investigate the behavior of finite volume schemes which satisfy a discrete version of the entropy inequality~\eqref{eq:entncvin} (with $\eta((h,hv),\alpha)=h|v|^2/2+gh^2/2+gh\alpha$ and $F((h,hv),\alpha)=v(\eta+gh^2/2)$). If in addition they are well-balanced with respect to stationary states~\eqref{eq:sw-ss} (i.e. they exactly preserve a discretized version of~\eqref{eq:sw-ss}) then one can deduce the asymptotic stability of these states, due to the numerical diffusion.
Note that the design of well-balanced schemes for shallow-water equations~\eqref{eq:sw} has deserved a huge attention these last twenty years, but in general, the discrete entropy inequalities are difficult to obtain. Examples and references will be provided in the sequel.

Let us emphasize that the present analysis is independent of the space dimension, and of the hyperbolicity of system~\eqrefs{eq:sys-ncvu}{eq:sys-ncva}. In some sense, it is very complementary to the Amadori and Gosse's works, see \cite{AmadoriGosseBook}, and shares the property to lead to estimates which are uniform in time.

The main tool we use to obtain these results is the \emph{relative entropy}. Let us briefly recall this notion in the conservative case.

% =================================================================
\paragraph{Relative entropy for systems of conservation laws.}
Consider a $N\times N$ system of conservation laws
\begin{equation}
  \label{eq:scl}
  \dv_t u + \div f(u) = 0
\end{equation}
endowed with a Lax entropy pair $(\eta,F)$, $\eta$ being strictly convex (in a similar sense as in assumption~\ref{item:H1}), i.e. admissible weak solutions of~\eqref{eq:scl} have to satisfy the inequality
\begin{equation*}
  \dv_t \eta(u) + \div F(u) \leq 0,
\end{equation*}
in the weak sense. The relative entropy associated with system~\eqref{eq:scl} is defined by
\begin{equation*}
  h(u,v) = \eta(u) - \eta(v) - \nabla\eta(v) \cdot (u-v) .
\end{equation*}
Note that this function is not symmetric, and we should say that $h$ is the entropy for $u$ relatively to $v$. It is easy to check that
\begin{equation}
  \label{eq:scl-ent-quad}
  \underline\eta |u-v|^2 \leq  h(u,v) \leq \overline\eta |u-v|^2
\end{equation}
where $|\cdot|$ is the Euclidian norm of $\R^N$ and $\sigma(\nabla^2 \eta) \subset [\underline\eta,\overline\eta]$.

Now, let us consider an admissible weak solution $u$ of~\eqref{eq:scl} and a constant vector $v\in\R^N$. After some calculations, one obtains
\begin{equation}
  \label{eq:scl-ent-pde}
  \dv_t h(u,v) + \div \big( F(u) - \nabla\eta(v) \cdot f(u) \big) \leq 0 . 
\end{equation}
If we integrate this inequality for $x\in\R^d$, the divergence term disappears and we have
\begin{equation*}
  \frac{\dm}{\dm t} \int_{\R^d} h(u,v) \ \dm x \leq 0.
\end{equation*}
We then deduce from~\eqref{eq:scl-ent-quad} the $L^2$-stability of constant, and thus stationary, solutions $v$ in the class of admissible weak solutions.

\begin{remark}
  In \cite{dafermos:entrel} and \cite{DiP:uniq}, Dafermos and DiPerna respectively proved such a stability result when $v$ is a strong solution of~\eqref{eq:scl}, also referred as weak--strong uniqueness. Note that the set of entropy weak solutions has been enlarged to measured-valued solutions in \cite{BDLS}. We believe that the present work could also be extended to this framework.
\end{remark}

% =================================================================
\paragraph{Outline of the paper.} 

In this work, we extend the previous analysis to systems of the form~\eqrefs{eq:sys-ncvu}{eq:sys-ncva}. For a given $\alpha$, we are able to compare an entropy weak solution $u$ to some particular stationary solutions. In section~\ref{sec:cont}, we detail the class of admissible weak solutions of \eqrefs{eq:sys-ncvu}{eq:sys-ncva} we consider in this work, which does not use any explicit definition of the non-conservative term. We then state and prove Theorem~\ref{thm:stab}, on the nonlinear stability of particular stationary states of \eqrefs{eq:sys-ncvu}{eq:sys-ncva}. In the next section, we provide some examples of systems which enter in this framework, and we explicit the associated stable stationary states. The aim of section~\ref{sec:num} is to present the discrete case. We then focus on entropy-stable finite volume schemes, which are well-balanced at least for the stationary states which are nonlinearly stable. Due to the numerical diffusion of first order time explicit schemes, the discrete stationary states are asymptotically stable. 

% =================================================================
% Section. Cas continu
% =================================================================
\section{Stability of stationary solutions}
% =================================================================
\label{sec:cont}

% =================================================================
\subsection{Definition of weak solutions}
% =================================================================

We aim at proving that stationary solutions are stable among entropy weak solutions. However, since we consider discontinuous $\alpha$, only in $\BV$ for instance, the products $s_i\dv_i\alpha$ in \eqref{eq:sys-ncvu} are not defined. Several theories exist in the literature to define them, but here we only use some basic and natural assumptions. We assume that the products $s_i\dv_i\alpha$ can be described by means of vector-valued Radon measures $\mu_i\in\M(\R^+\times\R^d)^N$\footnote{More precisely, $\M(X)$ denotes the set of locally bounded Radon measures on a set $X$, i.e. $\M(X)=(\Cc_\mathrm{c}(X))'$.} which satisfy at least the following properties:  
\begin{enumerate}[label=(P\arabic*)]
\item \label{item:P1} On any open set $B=B_t\times B_x\subset\R^+\times\R^d$ such that $\alpha\in\W^{1,\infty}(B_x)$, the measures $\mu_i$, $i=1,\dots,d$, satisfy
\begin{equation*}
 \forall \phi\in\Cz(B), \forall i=1,\dots,d, \ \int_B \phi \ \dm\mu_i(t,x) = \int_B \phi s_i(u,\alpha) \dv_i \alpha \ \dm t \ \dm x .
\end{equation*}
\item \label{item:P2} For any component $k=1,\dots,N$ and any dimension index $i=1,\dots,d$, 
\begin{equation*}
s_i^{(k)}\equiv0 \quad \Longrightarrow \quad \mu_i^{(k)}\equiv0 .
\end{equation*}
\end{enumerate}
We are now in position to provide a very general definition of solution.
% ------------------------------
\begin{definition}
 Let $u_0\in\BV(\R^d,\Omega)^N$, $\alpha\in\BV(\R^d)$ and $T>0$. A function $u\in\Cc((0,T);\BV(\R^d,\Omega))$ is an entropy weak solution of the Cauchy problem~\eqrefs{eq:sys-ncvu}{eq:sys-ncva}--\eqref{eq:ic} if there exists $(\mu_i)_{1\leq i\leq d}\subset\M(\R^+\times\R^d)$ satisfying assumptions~\ref{item:P1} and \ref{item:P2} such that, for all $\phi\in\Cz([0,T)\times\R^d)$, 
\begin{multline}
 \label{def:weak}
 - \int_0^T \int_{\R^d} \bigg( u \dv_t \phi + \sumi f_i(u,\alpha) \dv_i\phi \bigg) \ \dm x \ \dm t + \int_0^T \int_{\R^d} \phi \ \dm \mu(t,x) \\
 - \int_{\R^d} u_0(x)\phi(0,x) \ \dm x = 0 ,
\end{multline}
and, for all nonnegative $\phi\in\Cz([0,T)\times\R^d)$,
\begin{multline}
 \label{def:weak-ent}
 \int_0^T \int_{\R^d} \bigg( \eta(u,\alpha) \dv_t \phi + \sumi F_i(u,\alpha) \dv_i \phi \bigg) \ \dm x \ \dm t \\
 + \int_{\R^d} \eta(u_0,\alpha)(x)\phi(0,x) \ \dm x \geq 0 .
\end{multline}
\end{definition}
% ------------------------------
Such a definition is not sufficient to hope a well-posedness result, without any additional assumption on the measures $\mu_i$, but it is sufficient to obtain the stability results of the next sections. Besides, assumption~\ref{item:P1} is not necessary for the upcoming analysis. We introduce it to ensure that, if $\alpha\in\W^{1,\infty}(\R^d)$, the standard definition of entropy weak solutions is recovered.
It is also important to note that inequalities~\eqref{def:weak-ent} exactly correspond to the weak form of~\eqref{eq:entncvin}, so that the measures $\mu_i$ do not appear there. 

% =================================================================
\subsection{Relative entropy and nonlinear stability}
% =================================================================

As mention in the introduction, it seems impossible to construct a relative entropy for system~\eqrefs{eq:sys-ncvu}{eq:sys-ncva} to compare two solutions $(u,\alpha)$ and $(v,\beta)$. Nonetheless, one can define a relative entropy between two solutions $u$ and $v$, $\alpha$ being given and common.

\begin{definition}
 \label{def:er}
 The \emph{relative entropy} associated with the non-conservative system~\eqrefs{eq:sys-ncvu}{eq:sys-ncva}, endowed with an entropy $\eta$, is
\begin{equation}
 \begin{aligned}
  h \colon \Omega\times\Omega\times\R &\longrightarrow \R^+ \\
  	(u,v,\alpha) &\longmapsto \eta(u,\alpha) - \eta(v,\alpha) - \dv_u \eta(v,\alpha) \cdot (u-v) .
 \end{aligned} 
\end{equation}
\end{definition}

\begin{lemma}
\label{lem:h}
Assume that the entropy $\eta$ satisfies~\ref{item:H1}. Then, the relative entropy is convex with respect to its first variable and for all $u,v\in\Omega$, we have
\begin{equation}
 \underline\eta |u-v|^2 \leq h(u,v,\cdot) \leq \overline\eta |u-v|^2 .
\end{equation}
\end{lemma}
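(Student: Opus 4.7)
The plan has two independent parts that can be handled in sequence, both resting on standard second-order Taylor information about $\eta(\cdot,\alpha)$.

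For the convexity statement, I would simply differentiate $h(u,v,\alpha)$ twice in $u$. Since the terms $\eta(v,\alpha)$ and $\dv_u\eta(v,\alpha)\cdot(u-v)$ are respectively constant and affine in $u$, the Hessian $\dv_u^2 h(u,v,\alpha)$ coincides with $\dv_u^2\eta(u,\alpha)$. By hypothesis~\ref{item:H1} the spectrum of this matrix lies in $[\underline\eta,\overline\eta]\subset (0,+\infty)$, so the Hessian is positive definite and $h$ is strictly convex in its first variable. The regularity $\eta\in\Cc^2$ is exactly what is needed to interpret the Hessian classically.

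For the quadratic bounds, the natural tool is Taylor's formula with integral remainder, applied to $u\mapsto\eta(u,\alpha)$ around $v$:
\begin{equation*}
 h(u,v,\alpha) = \int_0^1 (1-\tau) \,(u-v)^\top \dv_u^2\eta\bigl(v+\tau(u-v),\alpha\bigr)(u-v) \ \dm\tau.
\end{equation*}
Since $\Omega$ is convex, the segment $[v,v+\tau(u-v)]$ stays in $\Omega$, so the spectral bound~\eqref{eq:eta2} applies pointwise in $\tau$. Plugging $\underline\eta\,|u-v|^2 \leq (u-v)^\top \dv_u^2\eta(\cdot,\alpha)(u-v) \leq \overline\eta\,|u-v|^2$ inside the integral and computing $\int_0^1 (1-\tau)\,\dm\tau$ yields the announced two-sided quadratic estimate (up to absorbing a harmless factor $1/2$ into the constants $\underline\eta,\overline\eta$, exactly as is done in the conservative estimate~\eqref{eq:scl-ent-quad}).

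I do not anticipate any real obstacle: the $\alpha$-variable plays the role of a mere parameter throughout, since the relative entropy is built by freezing $\alpha$ and only comparing states $u$ and $v$ in the first slot. The only mild point to state cleanly is that the spectral bound~\eqref{eq:eta2} is uniform in $(u,\alpha)$, which is precisely what allows the pointwise estimate inside the Taylor integral to be integrated into a global quadratic bound independent of the reference state $v$ and of $\alpha$.
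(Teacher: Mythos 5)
Your proof is correct and is exactly the standard Taylor-with-integral-remainder argument that the paper leaves implicit (the lemma is stated without proof, as is its conservative analogue \eqref{eq:scl-ent-quad}). Your parenthetical about the factor $1/2$ is well taken: since $\int_0^1(1-\tau)\,\dm\tau=1/2$, the honest constants are $\underline\eta/2$ and $\overline\eta/2$, and in particular the lower bound as printed in the lemma only holds after this harmless renaming of $\underline\eta$, which does not affect any subsequent use.
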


For a given $\alpha\in\Cc^1(\R^d)$, consider a smooth, and thus entropy conservative, solution $u$ of \eqrefs{eq:sys-ncvu}{eq:sys-ncva}, and a time-independent function $v$. Let us compute the equation satisfied by the relative entropy $h$:
\begin{align*}
 \dv_t h(u,v,\alpha) &= \dv_t \eta(u,\alpha) - \dv_u \eta(v,\alpha) \cdot \dv_t u \\
 &= - \div F(u,\alpha) + \dv_u \eta(v,\alpha) \cdot \sumi \big( f_i(u,\alpha) +  s_i(u,\alpha) \dv_i \alpha \big) \\
% &= - \sum_i \dv_i F_i(u,\alpha) + \dv_u \eta(v,\alpha) \cdot (\sum_i \dv_i f(u,\alpha) + \sum_i S_i(u,\alpha) \cdot \gd \alpha) \\
% &= - \sum_i \dv_i F_i(u,\alpha) + \sum_i \dv_u \eta(v,\alpha) \cdot \dv_i f(u,\alpha) + \sum_i \dv_u \eta(v,\alpha) \cdot S_i(u,\alpha) \cdot \gd \alpha \\
 &= - \div \big( F(u,\alpha) - \dv_u\eta(v,\alpha) \cdot f(u,\alpha) \big) \\
 &\quad - \sumi \dv_i(\dv_u \eta(v,\alpha)) \cdot f_i(u,\alpha) + \dv_u \eta(v,\alpha) \cdot \sumi s_i(u,\alpha) \dv_i \alpha .
\end{align*}
The two last terms are not in conservative form, but one could make them vanishing adding some assumptions on $v$. To do so, for any given constant vector $H_0\in\R^N$, we introduce $\S(H_0)$, the set of $(v,\alpha)\in\Omega\times\R$ such that:
\begin{enumerate}[label=(S\arabic*)]
\item \label{item:S1} $\dv_u\eta(v,\alpha) = H_0$.
\item \label{item:S2} For all $i=1,\dots,d$ and $k=1,\dots,N$, $H_0^{(k)} s_i^{(k)} \equiv 0$.
\end{enumerate}
We are then able to state the following stability result:
\begin{theorem}
  \label{thm:stab} Let $H_0\in\R^N$ and consider the set $\S(H_0)$ defined by \ref{item:S1} and \ref{item:S2}, assumed to be nonempty. Consider $\alpha\in\BV(\R^d)$ and a function $v\in\BV(\R^d,\Omega)$ such that $(v,\alpha)\in\S(H_0)$ almost everywhere. Then, $v$ is a stationary entropy weak solution of system~\eqrefs{eq:sys-ncvu}{eq:sys-ncva}. \\
  Moreover, let $T>0$, $u_0\in\BV(\R^d,\Omega)^N$, and $u\in\Cc((0,T);\BV(\R^d,\Omega))$ an associated entropy weak solution. Then, there exists a positive constant $L_f$, independent of $u$, $v$ and $\alpha$ such that
the following nonlinear stability property holds for all $R>0$ and for almost every $t\in[0,T]$:
\begin{equation}
 \label{eq:stabt}
 \int_{B(0,R)} h(u(t,x),v(x),\alpha(x)) \ \dm x \leq \int_{B(0,R+L_ft)} h(u_0(x),v(x),\alpha(x)) \ \dm x .
\end{equation}
\end{theorem}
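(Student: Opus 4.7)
The plan is to extend the smooth calculation sketched just before Definition \ref{def:er} to the $\BV$ setting by combining the two weak identities \eqref{def:weak} and \eqref{def:weak-ent}, using hypotheses (S1), (S2), and (P2) to annihilate every non-conservative contribution; the resulting weak entropy inequality for $h(u,v,\alpha)$ is then closed by a standard finite-speed-of-propagation argument on a truncated cone.

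For the stationarity of $v$, (S1) and (S2) together yield, componentwise, the dichotomy: for each $k$, either $H_0^{(k)}=0$ or $s_i^{(k)}\equiv 0$ for every $i$. In the latter case (P2) forces $\mu_i^{(k)}\equiv 0$, so the $k$-th equation is conservative and reduces to $\div f^{(k)}(v,\alpha)=0$; that this identity holds weakly follows from (H2), (S1), and the BV chain rule, which together give the pointwise scalar identity $H_0\cdot\bigl(\div f(v,\alpha)+\sum_i s_i(v,\alpha)\dv_i\alpha\bigr)=\div F(v,\alpha)$, with $H_0\cdot s_i\dv_i\alpha\equiv 0$ from (S2). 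For the remaining components ($H_0^{(k)}=0$), one defines $\mu_i^{(k)}$ so as to absorb $-\dv_i f_i^{(k)}(v,\alpha)$ in a way compatible with (P1), which is possible by the $\BV$ regularity of $v$ and $\alpha$. Altogether, $v$ is a stationary entropy weak solution and $\div F(v,\alpha)=0$.

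To obtain the weak entropy inequality for $h$, I use that by (S1) the relative entropy simplifies to
\[
h(u,v,\alpha)=\bigl(\eta(u,\alpha)-\eta(v,\alpha)\bigr)-H_0\cdot(u-v)
\]
with $H_0$ constant. I then test \eqref{def:weak-ent} against a nonnegative $\phi\in\Cz([0,T)\times\R^d)$ and the equation \eqref{def:weak} against the constant vector $H_0\phi$, and subtract. The measure contribution in the second test reads $\sum_{i,k}H_0^{(k)}\int\phi\,\dm\mu_i^{(k)}$, which vanishes identically: (S2) gives $H_0^{(k)}\neq 0\Rightarrow s_i^{(k)}\equiv 0$, and (P2) then forces $\mu_i^{(k)}\equiv 0$. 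The time-independent quantities $\eta(v,\alpha)$ and $H_0\cdot v$ contribute only through the initial datum when integrated against $\dv_t\phi$, so one obtains the distributional inequality
\begin{equation}
\label{eq:hweak}
\dv_t h(u,v,\alpha) + \sum_{i=1}^d \dv_i\bigl(F_i(u,\alpha)-H_0\cdot f_i(u,\alpha)\bigr) \leq 0
\end{equation}
with initial datum $h(u_0,v,\alpha)$.

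The final step is finite speed of propagation. Thanks to (H2) and (S1), $\dv_u\bigl(F_i(u,\alpha)-H_0\cdot f_i(u,\alpha)\bigr)\big|_{u=v}=0$, so a second-order Taylor expansion combined with the two-sided bound of Lemma \ref{lem:h} produces a constant $L_f$, depending only on uniform $C^2$ bounds of $\eta$ and $f$ on $\Omega\times\R$ and on $\underline\eta$, such that
\[
\bigl|F_i(u,\alpha)-H_0\cdot f_i(u,\alpha)-F_i(v,\alpha)+H_0\cdot f_i(v,\alpha)\bigr|\leq L_f\,h(u,v,\alpha).
\]
Since $F_i(v,\alpha)-H_0\cdot f_i(v,\alpha)$ has vanishing $x$-divergence by the stationarity step, subtracting it in \eqref{eq:hweak} is harmless. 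Plugging a truncated-cone test function $\phi(t,x)=\theta_\epsilon(t)\,\chi_\delta(R+L_f(t_0-t)-|x|)$ into \eqref{eq:hweak} and sending $\epsilon,\delta\to 0$ then yields \eqref{eq:stabt}. I expect the main difficulty to be the rigorous derivation of \eqref{eq:hweak}: the subtraction of \eqref{def:weak} dotted with a constant vector must be justified, the vanishing of the measure term crucially uses both (S2) and (P2), and the $\BV$ time-independent quantities appearing through $\dv_t\phi$ must be handled with care; everything that follows is a classical Gronwall-on-a-cone computation.
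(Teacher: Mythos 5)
Your derivation of the stability estimate \eqref{eq:stabt} is essentially the paper's own proof: test \eqref{def:weak} against $H_0\phi$, use the dichotomy produced by \ref{item:S1}--\ref{item:S2} together with \ref{item:P2} to annihilate every component of the measure term, subtract from \eqref{def:weak-ent} (the time-independent terms $\eta(v,\alpha)$ and $H_0\cdot v$ only feeding the initial datum), and close with a truncated-cone test function. On the flux bound you are in fact slightly more careful than the paper, which writes $|F-H_0\cdot f|\leq L_f h$; such a bound can only hold near $u=v$ after subtracting the stationary flux $F(v,\alpha)-H_0\cdot f(v,\alpha)$, exactly as you do by noting that $\dv_u\bigl(F_i-H_0\cdot f_i\bigr)$ vanishes at $u=v$ thanks to \ref{item:H2} and \ref{item:S1}.

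The genuine gap is in your direct verification that $v$ is a stationary entropy weak solution. The identity you extract from \ref{item:H2}, \ref{item:S1}, \ref{item:S2} and the chain rule, namely $H_0\cdot\div f(v,\alpha)=\div F(v,\alpha)$ (equivalently, that $F(v,\alpha)-H_0\cdot f(v,\alpha)$ is divergence-free), is a \emph{single scalar} relation; it cannot deliver the $N$ componentwise equations $\div f^{(k)}(v,\alpha)+\sum_i\mu_i^{(k)}=0$, and in particular does not imply $\div f^{(k)}(v,\alpha)=0$ for the conservative components (there may be several of them, and only one linear combination of all components is controlled). Likewise, for the components with $H_0^{(k)}=0$ you are not free to ``define $\mu_i^{(k)}$ so as to absorb $-\dv_i f_i^{(k)}(v,\alpha)$'': on any region where $\alpha\in\W^{1,\infty}$, property \ref{item:P1} pins $\mu_i^{(k)}$ down to $s_i^{(k)}(v,\alpha)\dv_i\alpha$, so the corresponding pointwise equation is a genuine constraint on $(v,\alpha)$, not a definition. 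In the paper's examples these constraints are satisfied because \ref{item:S2} forces the velocity to vanish and kills the convective fluxes, but they are not formal consequences of \ref{item:S1}--\ref{item:S2} alone. The paper sidesteps this entirely: it first proves \eqref{eq:stabt} for an arbitrary entropy weak solution and then obtains stationarity as a corollary by taking $u_0=v$, which forces $u(t,\cdot)=v$ almost everywhere via Lemma~\ref{lem:h}. You should either adopt that shortcut or supply the missing componentwise verification.
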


\begin{proof}
 First, let us remark that the stability inequality~\eqref{eq:stabt} implies that $v$ is a stationary solution of system~\eqrefs{eq:sys-ncvu}{eq:sys-ncva}. Indeed, if we choose $u_0=v$, the right-hand side of~\eqref{eq:stabt} is null, by the properties of $h$, see Lemma~\ref{lem:h}. Therefore, $u$ being an entropy weak solution and $v$ being time-independent, one may deduce that $v$ is a stationary entropy weak solution using once again the properties of $h$.
 
 Let us now rewrite the calculations described above, but in the weak sense. By assumptions~\ref{item:S1} and~\ref{item:S2}, $H_0\cdot s_i(\cdot,\alpha)=0$ for all $i$. In other words, this means that if the $i$-th component of $H_0$ is non-zero, then $s_i\equiv0$. We now use the definition of $u$ and assumption~\ref{item:P2} on the non-conservative product to obtain, for all $\phi\in\Cz([0,T)\times\R^d)$, 
\begin{equation*}
 \label{def:Hweak}
 \int_0^T \int_{\R^d} H_0\cdot(u \dv_t \phi + \sumi f_i(u,\alpha) \dv_i \phi) \ \dm x \ \dm t + \int_{\R^d} H_0\cdot u_0(x)\phi(0,x) \ \dm x = 0 .
\end{equation*}
Now, using the entropy inequality \eqref{def:weak-ent} for $u$ and the fact that $v$ is independent of time, one has
\begin{multline}
\label{eq:hedp}
 \int_0^T \int_{\R^d} h(u,v,\alpha) \dv_t \phi \ \dm x \ \dm t 
 + \int_0^T \int_{\R^d} \sumi \big( F_i(u,\alpha) - H_0 \cdot f_i(u,\alpha) \big) \dv_i \phi \ \dm x \ \dm t \\
 + \int_{\R^d} h(u_0,v,\alpha) \phi(0,x) \ \dm x \geq 0 ,
\end{multline}
since $h(u,v,\alpha)=\eta(u,\alpha)-\eta(v,\alpha)-H_0\cdot(u-v)$. To obtain inequality \eqref{eq:stabt}, we introduce $L_f$ such that
\begin{equation*}
 |F-H_0 \cdot f| \leq L_f h
\end{equation*}
which is comparable to the maximum of the spectral radii of $\dv_uf_i$ (for more details, see \cite{dafermos3} and \cite{CMS}). It suffices now to introduce, $t$ and $R$ being fixed, 
\begin{equation*}
 w_\epsilon(\tau) = 
 \begin{cases}
  1 & 0 \leq \tau \leq t \\
  1 + (t-\tau)/\epsilon & t < \tau \leq t+\epsilon \\
  0 & t+\epsilon < \tau
 \end{cases}
\end{equation*}
and
\begin{equation*}
 \chi_\epsilon(\tau,x) = 
 \begin{cases}
  1 & |x| \leq R + L_f (t-\tau) \\
  1 + (R + L_f (t-\tau) - |x|)/\epsilon & 0 < |x| - R - L_f (t-\tau) \leq \epsilon \\
  0 & R + L_f (t-\tau) + \epsilon < |x|
 \end{cases}
\end{equation*}
and take $\phi(\tau,x)=\chi_\epsilon(\tau,x)w_\epsilon(\tau)$ (we omit the passage from Lipschitz continuous functions to $\Cz$ functions). Plugging this test function in \eqref{eq:hedp} yields
\begin{align*}
& \frac{1}{\epsilon} \int_t^{t+\epsilon} \int_{B(0,R+\epsilon)} h(u,v,\alpha)(\tau,x) \chi_\epsilon(\tau,x) \ \dm x \ \dm \tau \\
& \leq \int_{B(0,R+L_ft+\epsilon)} h(u_0,v,\alpha) \chi_\epsilon(0,x) \ \dm x \\
& - \frac{1}{\epsilon} \int_0^{t+\epsilon} \int_{0 < |x| - R - L_f (t-\tau) < \epsilon} w_\epsilon(\tau) \bigg[ L_f h(u,v,\alpha)  \\
& \qquad \qquad \qquad \qquad \qquad + \frac{x}{|x|} ( F(u,\alpha)- H_0 \cdot f(u,\alpha)) \bigg] \ \dm x \ \dm \tau .
\end{align*}
By definition of $L_f$, the last integral is nonnegative, so that,  letting $\epsilon$ tend to $0$ provides inequality \eqref{eq:stabt}.
\end{proof}

\begin{remark}
 As mentioned above, assumption~\ref{item:P1} has not been used in the proof. 
\end{remark}
\begin{remark}
 The nonlinear stability due to \eqref{eq:stabt} implies the $\L^2$ stability and the uniqueness of stationary solutions $v$ satisfying \ref{item:S1} and \ref{item:S2}, in the class of entropy weak solutions. Let us stress that $v$ is only $\BV$, while classical results of nonlinear stability are obtained assuming the smoothness of the reference solution. Moreover, this result is independent of the possible lack of hyperbolicity of the system, and it turns out that it applies to systems for which uniqueness may fail, such as those considered for instance in \cite{isaacsontemple95} and \cite{goatinlfres} (see also \cite{chinnayya04} for the particular case of shallow-water equations).
\end{remark}
\begin{remark}
 We are not able to extend estimate \eqref{eq:stabt} to non-stationary solution $v$, keeping $\alpha$ in $\BV(\R^d)$. The case of a smooth $\alpha$ could probably be handled by adapting the results of \cite{JR:ER}.
\end{remark}
\begin{remark}
\label{rem:stab}
 We only obtain the stability of $v$, but asymptotic stability could not be reached without more structure. Indeed, even for standard systems of conservation laws, asymptotic stability of constant solutions is merely proved for genuinely nonlinear $2\times2$ systems of conservation laws \cite{glimm-lax} (see \cite{dafermos3} for more references). Another way to obtain asymptotic stability would be to add some dissipative term to system~\eqrefs{eq:sys-ncvu}{eq:sys-ncva}. On the other side, the existence of time periodic non-dissipative solutions have been addressed in a series of paper by Temple and Young, the most recent being \cite{MR3377838}. 
\end{remark}

% =================================================================
\section{Examples}
% =================================================================
\label{sec:ex}

We provide here some examples of equations which enter in this framework. In each case, we provide the stationary solutions concerned by Theorem~\ref{thm:stab}.

% =================================================================
\subsection{Shallow-water equations with bathymetry}
% =================================================================

The first example is the well-known Saint-Venant system, which models a free surface flow of water of a non flat bottom. The unknowns are the height of water $h$, assumed to remain positive, and the depth-averaged velocity $U$. They satisfy the following equations, posed for $(t,x)\in\R^+\times\R^2$:
\begin{equation}
\label{eq:stv}
\begin{cases}
 \dv_t h + \div (hU) = 0 , \\
 \dv_t (hU) + \div (h U \otimes U) + \gd \bigg(g \ds\frac{h^2}{2} \bigg) + gh \gd \alpha = 0 , \\
 \dv_t \alpha = 0 .
\end{cases}
\end{equation}
Here, $\alpha$ plays the role of the bathymetry, and $g$ is the gravity constant. By its simplicity in comparison with the incompressible Euler equations with a free surface, this model is very popular and numerical simulations show its reliability, even when $\alpha$ is discontinuous.

This system of equations may be endowed with an entropy inequality of the form~\eqref{eq:entncvin}, setting
\begin{equation*}
 \eta(u,\alpha) = h U^2/2 + gh(h/2+\alpha) \et F_i(u,\alpha) = U_i(\eta(u,\alpha)+gh^2/2)
\end{equation*}
where $u=(h,hU)$. The convexity of $\eta$ with respect to $u$ is classical and one can see that $\eta$ is only linear in $\alpha$.

The description of all possible stationary solutions is very difficult in practice. The simplest ones correspond to a ``lake at rest" and are defined by
\begin{equation}
 \label{eq:lake}
 h+\alpha = Z_0 \et U=0 \quad \text{a.e.}
\end{equation}
where $Z_0$ is a given real constant greater than the maximum of $\alpha$. On the other hand, the entropy variable for is
\begin{equation*}
 \dv_u \eta(u,\alpha) = 
\begin{pmatrix}
 - U^2/2 + g(h+\alpha) \\ U
\end{pmatrix} .
\end{equation*}
As a consequence, assumption~\ref{item:S2} yields $U=0$, since $s_1=s_2=(0,gh)^\top$. Next, assumption~\ref{item:S1} corresponds to equality $h+\alpha=Z_0$. To sum up, we have:
\begin{corollary}
 Stationary states of the shallow-water equations~\eqref{eq:stv} given by~\eqref{eq:lake} (lake at rest) are nonlinearly stable, in the sense of theorem~\ref{thm:stab}.
\end{corollary}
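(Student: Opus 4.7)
The plan is to verify that the lake-at-rest states fit into the abstract framework of Theorem~\ref{thm:stab} by exhibiting an explicit $H_0\in\R^3$ such that the pair $(v,\alpha)$ lies in $\S(H_0)$, and then to quote the theorem. Essentially all the ingredients have already been assembled in the discussion preceding the statement; what remains is to package them correctly.

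First, I would check that the abstract assumptions \ref{item:H1} and \ref{item:H2} hold for the shallow-water system~\eqref{eq:stv} with the entropy pair recalled above, at least on any relatively compact subset of $\Omega=\{(h,hU):h>0\}$ (the remark following \ref{item:H1} allows restriction to a neighborhood of the reference state, which is enough since the lake-at-rest profile is bounded in $h$). The convexity of $\eta(u,\alpha)=h|U|^2/2+gh(h/2+\alpha)$ in $u=(h,hU)$ and the compatibility identity between $\eta,F$ and the fluxes $f_i,s_i$ are standard.

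Next, for a given lake-at-rest profile $\bar h+\alpha=Z_0$, $\bar U=0$, I would propose
\begin{equation*}
H_0=(gZ_0,0,0)^\top\in\R^3.
\end{equation*}
Plugging $(v,\alpha)=((\bar h,0),\alpha)$ into the expression for $\dv_u\eta$ displayed just above the corollary gives $\dv_u\eta(v,\alpha)=(-0/2+g(\bar h+\alpha),0)^\top=(gZ_0,0,0)^\top=H_0$, so \ref{item:S1} holds almost everywhere. For \ref{item:S2}, one simply notes that the non-conservative coefficients read $s_1=(0,gh,0)^\top$ and $s_2=(0,0,gh)^\top$; hence $H_0^{(k)}s_i^{(k)}\equiv0$ componentwise, because the only nonzero component of $H_0$ is the first, whereas the first component of each $s_i$ vanishes.

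The set $\S(H_0)$ is therefore nonempty and contains $(v,\alpha)$ a.e.\ on $\R^d$. Under the assumption $\alpha\in\BV(\R^d)$ with $Z_0>\esssup\alpha$, one has $v=(\bar h,0)\in\BV(\R^d,\Omega)$, so Theorem~\ref{thm:stab} applies and yields both the fact that $v$ is a stationary entropy weak solution and the local $\L^2$-type stability estimate~\eqref{eq:stabt} measured through the relative entropy $h(u,v,\alpha)$. No step is genuinely hard here; the only mildly delicate point is to restrict to a subdomain of $\Omega$ on which \ref{item:H1} is quantitatively satisfied (the spectrum bound~\eqref{eq:eta2} degenerates as $h\to0$), which is precisely the use of the remark following \ref{item:H1} and causes no difficulty in practice since the reference lake-at-rest height is bounded away from zero.
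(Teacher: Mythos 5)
Your proposal is correct and matches the paper's argument: the paper likewise computes $\dv_u\eta(u,\alpha)=(-|U|^2/2+g(h+\alpha),U)^\top$ and identifies conditions \ref{item:S1}--\ref{item:S2} with $U=0$ and $h+\alpha=Z_0$, so that Theorem~\ref{thm:stab} applies directly; your explicit choice $H_0=(gZ_0,0,0)^\top$ is exactly the implicit one there. Your additional remark that the uniform spectral bound~\eqref{eq:eta2} degenerates as $h\to0$ and must be handled via the remark following \ref{item:H1} is a point the paper passes over silently, and it is well taken.
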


%On the other hand, one has:
%\begin{lemma}
% Let us consider the shallow-water equations~\eqref{eq:stv} in 1D, in a bounded domain, say $[0,1]$, with reflection boundary conditions $U(t,0)=U(t,1)=0$ for all $t>0$. Let $\alpha\in\BV(0,1)$ be given.
%\end{lemma}

% =================================================================
\subsection{Gas dynamics in porous media}
% =================================================================

We now study the compressible Euler equations. When the flow lies in porous media, $\alpha>0$ being the porosity, they become
\begin{equation}
 \label{eq:Euler}
\begin{cases}
 \dv_t (\alpha \rho) + \div (\alpha \rho U) = 0, \\
 \dv_t (\alpha \rho U) + \div (\alpha \rho U \otimes U) + \gd (\alpha p) - p \gd \alpha = 0, \\
 \dv_t (\alpha \rho E) + \div (\alpha U(\rho E + p)) = 0,
\end{cases}
\end{equation}
where $\rho$, $U$, $E$ and $p$ respectively are the density, the velocity, the total energy and the pressure of the fluid. Note that the porosity $\alpha$ is supposed to be positive. The total energy is the sum of the specific energy and the kinetic energy, i.e.
\begin{equation*}
 E = e + U^2/2
\end{equation*}
and we assume the fundamental thermodynamic relation
\begin{equation}
\label{eq:Tds}
 T \dm s = \dm e + p \, \dm \tau,
\end{equation}
where $s$ is the specific entropy, $T$ the temperature and $\tau=1/\rho$ the specific volume. By classical calculations, one may check that classical solutions of~\eqref{eq:Euler} satisfy
\begin{equation*}
 \dv_t s + U \bdot \gd s = 0 .
\end{equation*}
From this equation and mass conservation, we deduce the following entropy inequality for weak solutions,
\begin{equation*}
 \dv_t (- \alpha\rho s) + \div (- \alpha\rho s U) \leq 0.
\end{equation*}
On the other hand, the function $S\colon(\rho,\rho U,\rho E)\mapsto-\rho s(\tau,e)$ is convex if $s$ is concave, see for instance \cite{godlewski96}. As a consequence, if we note $u=(\alpha\rho,\alpha\rho U,\alpha\rho E)$, the function $\eta\colon(u,\alpha)\mapsto-\alpha\rho s(\tau,e)$ is also convex w.r.t. $u$ by the identity $\eta(u,\alpha)=\alpha S(u/\alpha)$, while the associated entropy flux is $F(u,\alpha)=U\eta(u,\alpha)$.

By classical calculations, one may check that
\begin{equation*}
\partial_u \eta(u,\alpha) = \frac{1}{T}
\begin{pmatrix}
e+p/\rho-Ts-|U|^2/2 \\
U \\
-1
\end{pmatrix} .
\end{equation*}
Due to the form of system~\eqref{eq:Euler}, assumption \ref{item:S1} on the third component of $\dv_u\eta(u,\alpha)$ leads to a constant temperature $T$. Since $T>0$, assumption \ref{item:S2} implies $U=0$. At last, by classical thermodynamical arguments, assumption \ref{item:S1} on the first component of $\dv_u\eta(u,\alpha)$ provides that $p$ is also constant.

\begin{corollary}
 Stationary states of the compressible Euler equations in porous media~\eqref{eq:Euler} such that the temperature $T$ and the pressure $p$ are constant and $U=0$ a.e. are nonlinearly stable, in the sense of theorem~\ref{thm:stab}.
\end{corollary}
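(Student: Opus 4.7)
The plan is to exhibit a constant vector $H_0\in\R^{d+2}$ for which the set $\S(H_0)$ of Theorem~\ref{thm:stab} coincides exactly with the family of stationary states described in the corollary, and then to invoke that theorem. The first bookkeeping step is to read off the non-conservative coefficients $s_i$ directly from~\eqref{eq:Euler}. The mass and energy equations are in conservation form, so $s_i^{(1)}\equiv0$ and $s_i^{(d+2)}\equiv0$; only the momentum block contributes, with $-p\gd\alpha$ giving $s_i^{(k)}=-p\delta_{ik}$ when $k$ indexes the $i$-th momentum component. Using the expression of $\dv_u\eta(u,\alpha)$ recalled in the excerpt, I would set
\begin{equation*}
H_0 \;=\; \frac{1}{\bar T}\begin{pmatrix} \mu(\bar T,\bar p) \\ 0 \\ -1 \end{pmatrix},
\end{equation*}
where $\bar T>0$ and $\bar p$ are the prescribed constants and $\mu=e+p/\rho-Ts$ denotes the specific Gibbs free energy (chemical potential).

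Assumption~\ref{item:S2} is then immediate: the only nonzero components of the $s_i$ lie in the momentum block, while the momentum entries of $H_0$ vanish. To verify~\ref{item:S1}, I would read the equation $\dv_u\eta(v,\alpha)=H_0$ entry by entry. The third component forces $T=\bar T$; the momentum entries then force $U=0$ because $T>0$; and the first entry reduces to $\mu(T,p)=\mu(\bar T,\bar p)$. The main subtlety is to extract $p=\bar p$ from this last equation at $T=\bar T$ fixed. Invoking the Gibbs--Duhem identity $\dm\mu=-s\,\dm T+\tau\,\dm p$ together with $\tau=1/\rho>0$, the map $p\mapsto\mu(\bar T,p)$ is strictly monotone, so $\mu$ constant forces $p=\bar p$. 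Hence $\S(H_0)$ is precisely $\{T\equiv\bar T,\ p\equiv\bar p,\ U\equiv 0\}$.

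For any $\alpha\in\BV(\R^d)$ with $\alpha>0$, the candidate $v=(\alpha\bar\rho,0,\alpha\bar\rho\,\bar e)$, with $(\bar\rho,\bar e)$ determined from $(\bar T,\bar p)$ through the equation of state~\eqref{eq:Tds}, belongs to $\BV(\R^d,\Omega)$ and satisfies $(v,\alpha)\in\S(H_0)$ almost everywhere, so $\S(H_0)$ is nonempty. Applying Theorem~\ref{thm:stab} with this $H_0$ and $v$ then yields the stationarity of $v$ and the nonlinear stability estimate~\eqref{eq:stabt}. The only step that is not direct unpacking of the hypotheses is the thermodynamic one identifying ``$\mu$ constant at $T$ constant'' with ``$p$ constant''; everything else is structural bookkeeping on system~\eqref{eq:Euler} and its entropy variable.
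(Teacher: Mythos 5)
Your proposal is correct and follows essentially the same route as the paper: read off the entropy variable $\dv_u\eta$, use \ref{item:S2} on the momentum block to force $U=0$, and use \ref{item:S1} on the remaining components to force $T$ and $p$ constant. The only place the paper is terser is the final step, which it dismisses as ``classical thermodynamical arguments''; your Gibbs--Duhem justification ($\dm\mu=-s\,\dm T+\tau\,\dm p$ with $\tau>0$) is exactly the intended argument, and your explicit nonemptiness check of $\S(H_0)$ is a harmless addition.
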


% =================================================================
\subsection{Gas dynamics with sources in Lagrangian coordinates}
% =================================================================

%Almost any system as soon as the nonconservative product is located on a LD field: gas dynamics with gravity and/or friction \cite{cargo94,gdtCEA:AS}, Baer--Nunziato without relaxation \cite{baernunziato,embid-baer,cras-7eq}, coupling by standing wave in Lagrangian coordinates \cite{gdt:lagrange}\dots

In the previous example, the source term leads to a standing wave in Eulerian coordinates where $\alpha$ is discontinuous. In the case of some source terms, like gravity, it may be more relevant to use a moving wave associated to $\alpha$. This enables in particular to construct well-balanced and asymptotic preserving schemes, see \cite{cargo94}, \cite{gallice:crassource} or \cite{gdtCEA:AS} for instance. In order to deal with this case, we place ourselves in Lagrangian coordinates and study the problem 
\begin{equation}
\label{eq:lag}
\begin{cases}
 \Dd_t \alpha = 0, \\
 \Dd_t \tau - \dv_m U = 0, \\
 \Dd_t U + \dv_m p = \dv_m \alpha, \\
 \Dd_t E + \dv_m (p U) = U \dv_m \alpha,
\end{cases}
\end{equation}
where $u=(\tau,U,E)$, while $\tau$, $p$ and $E$ are the same variables as in the previous section. In the case of a gravity source term, one defines $\dv_m\alpha=g$. Multiplying the PDE's for $U$ and subtracting it to the PDE for $E$ provides
\begin{equation*}
 \Dd_t e + p \dv_m U = 0 .
\end{equation*}
This PDE, together with the fundamental thermodynamic relation∞ \eqref{eq:Tds}, gives
\begin{equation*}
 \Dd_t s = 0 .
\end{equation*}
One may remark that $s$ is independent of $\alpha$ and is a concave function, so that it suffices to choose $\eta=-s$ for the mathematical entropy. From assumption~\ref{item:S1}, one deduces that the stationary states of interest are constant states. Unfortunately, assumption~\ref{item:S2} cannot be fulfilled, because of the third component which would lead to stationary states with an infinite temperature. In order to circumvent this, we follow \cite{cargo94} where was remarked that
\begin{equation*}
 U \dv_m \alpha = \dv_m (U \alpha) - \alpha \dv_m U = \dv_m (U \alpha) - \alpha \Dd_t \tau = \dv_m (U \alpha) - \Dd_t (\alpha\tau) .
\end{equation*}
By introducing $F=E+\tau\alpha$, system~\eqref{eq:lag} becomes conservative:
\begin{equation}
\label{eq:lagF}
\begin{cases}
 \Dd_t \alpha = 0, \\
 \Dd_t \tau - \dv_m U = 0, \\
 \Dd_t U + \dv_m (p-\alpha) = 0, \\
 \Dd_t F + \dv_m ((p-\alpha) U) = 0.
\end{cases}
\end{equation}
Using the new variable $u=(\tau,u,F)$, we define the mathematical entropy 
\begin{equation*}
 \eta(u,\alpha) = - s(\tau,F-\tau\alpha-U^2/2) ,
\end{equation*}
which satisfies assumptions \ref{item:H1} and \ref{item:H2}. As far as stationary states are concerned, condition \ref{item:S2} is trivial since system~\eqref{eq:lagF} is conservative, while condition \ref{item:S1} leads to the classical hydrostatic equilibria.

\begin{corollary}
 Stationary states of the compressible Euler equations with gravity in Lagrangian coordinates~\eqref{eq:lagF} such that the velocity $U$ and $p-\alpha$ are constant are nonlinearly stable, in the sense of theorem~\ref{thm:stab}.
\end{corollary}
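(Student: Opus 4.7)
The plan is to apply Theorem~\ref{thm:stab} directly to the reformulated system~\eqref{eq:lagF}. Two features make this transparent. First, since \eqref{eq:lagF} is purely conservative, every $s_i$ vanishes identically and assumption~\ref{item:S2} holds automatically for every $H_0\in\R^3$. Second, the author has already asserted that the entropy $\eta(u,\alpha)=-s(\tau,F-\tau\alpha-U^2/2)$ satisfies~\ref{item:H1} and~\ref{item:H2}. The only real work is therefore to characterise condition~\ref{item:S1}, namely the stationary profiles $(v,\alpha)$ for which $\dv_u\eta(v,\alpha)$ is a fixed constant vector $H_0$.

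I would make $\dv_u\eta$ explicit via the fundamental thermodynamic relation~\eqref{eq:Tds}, which gives $\dv_e s=1/T$ and $\dv_\tau s=p/T$. Applying the chain rule to $\eta=-s(\tau,F-\tau\alpha-U^2/2)$ yields
\begin{equation*}
 \dv_u\eta(v,\alpha)=\frac{1}{T}\begin{pmatrix}\alpha-p\\ U\\ -1\end{pmatrix}.
\end{equation*}
Forcing each component to be constant, the third entry pins $T$ to a positive constant, the second then pins $U$ to a constant, and the first pins $p-\alpha$ to a constant. These are exactly the classical hydrostatic equilibria in the statement, so $\S(H_0)$ is nonempty and contains $v$; Theorem~\ref{thm:stab} then produces~\eqref{eq:stabt} and hence the nonlinear stability at once.

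I do not expect a serious technical obstacle in this chain of reasoning. The only subtle point worth flagging is that condition~\ref{item:S1} actually forces the three quantities $T$, $U$ and $p-\alpha$ to be constant, whereas the corollary's hypothesis names only the latter two; the constancy of $T$ is built into the phrase \emph{hydrostatic equilibrium} used in the surrounding discussion, but when writing the proof out in full I would incorporate it explicitly into the hypothesis so that the logical content of the statement is aligned with what Theorem~\ref{thm:stab} certifies. A minor bookkeeping issue that might arise in practice is the global spectral bound in~\ref{item:H1}: the Hessian of $-s(\tau,e)$ degenerates near vacuum and at zero temperature, but as indicated in the remark after~\eqref{eq:eta2} one may localise around the target hydrostatic profile so that $(\rho,T)$ stays in a compact subset of $(0,\infty)^2$, on which the required bounds are standard.
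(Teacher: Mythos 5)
Your argument is correct and follows the same route as the paper: condition~\ref{item:S2} is vacuous because system~\eqref{eq:lagF} is conservative, and your explicit computation of $\dv_u\eta$ via~\eqref{eq:Tds} confirms that condition~\ref{item:S1} singles out exactly the hydrostatic equilibria, after which Theorem~\ref{thm:stab} applies. Your side remark that \ref{item:S1} additionally forces $T$ to be constant --- so that the corollary's hypothesis should be read as an \emph{isothermal} hydrostatic equilibrium --- is a correct and worthwhile precision that the paper leaves implicit in the phrase ``classical hydrostatic equilibria.''
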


% =================================================================
% Section. Numérique
% =================================================================
\section{Well-balanced schemes and stability of discrete stationary solutions}
% =================================================================
\label{sec:num}

We look now at the discrete counterpart of the nonlinear stability result stated in theorem~\ref{thm:stab}. We focus here in time-explicit first order finite volume methods, but the most of the following analysis can be easily extended to more complex methods which share similar properties.

% =================================================================
\subsection{General setting and main properties of the schemes}
% =================================================================

We consider a general mesh of $\R^d$, denoted by $\T$, defined as a family of disjoint polygonal connected subsets of $\R^d$ such that $\R^d$ is the union of the closure of the elements of $\T$ (called control volumes or cells in the following) and such that the common “interface” of any two control volumes is included in a hyperplane of $\R^d$. The interface which separates two control volumes $K$ and $L$  is noted $e_{KL}$ (we have of course $e_{KL}=e_{LK}$) and $n_{KL}$ the unit normal vector to $e_{KL}$ oriented from $K$ to $L$ (then $n_{KL}=-n_{LK}$). For any $K\in\T$, $\Ns(K)\subset\T$ denotes the set of cells which share a common interface with $K$. We assume that $h=\sup\{\mathrm{diam}(K),K\in\T\}<+\infty$ and that there exists a constant $a>0$ such that
\begin{equation}
\label{eq:meshreg}
\forall K\in\T, \quad |K| \geq ah^d \qandq \partial K = \sum_{L\in \Ns(K)} |e_{KL}| \leq \frac{h^{d-1}}{a} ,
\end{equation}
where $|K|$ is the $d$-dimensional Lebesgue measure of $K$ and $|e_{KL}|$ the $(d-1)$-dimensional Lebesgue measure of $e_{KL}$. For simplicity, we use a uniform time discretization, introducing the time step $\Delta t>0$.

The discrete sequence of approximation is denoted $(u_K^n)$, with $n\in\N^*$ and $K\in\T$, where the initial data is given by
\begin{equation}
 \label{eq:icFV}
 u_K^0 = \frac{1}{|K|} \int_K u_0(x) \ \dm x,
\end{equation}
and the stationary variable $\alpha$ is approximated in the same way, i.e.
\begin{equation}
 \label{eq:alFV}
 \alpha_K = \frac{1}{|K|} \int_K \alpha(x) \ \dm x.
\end{equation}
Finite volumes schemes we consider follow the general form, for all $K\in\T$ and $n\in\N$,
\begin{equation}
 \label{eq:FV}
 u^\np_K = u_K^n - \frac{\Delta t}{|K|} \sum_{L\in\Ns(K)} |e_{KL}| g(w_K^n,w_L^n;n_{KL})
\end{equation}
where $w_K^n=(u_K^n,\alpha_K)$ and $g$ is a numerical flux, which fulfills assumptions provided in the sequel.

The design of finite volume schemes for non-conservative equations is a very difficult task and convergence is hard to obtain in the singular case. In the general case, this can be achieved using random sampling instead of classical average techniques --- Glimm \cite{glimm65:_solut} vs. Godunov \cite{godunov:schema} --- as explained for instance in \cite{CC:ncv} (see also references therein). Here, since the non-conservative products are located in standing discontinuities, one may hope to have a better control of their approximation \cite{gosse01}. 

\begin{remark}
Using the theory of germs developed in \cite{AKR}, it is possible to prove convergence of well adapted numerical schemes in the resonant scalar case
\begin{equation*}
 \dv_t u + \dv_x (u^2/2) + u \dv_x \alpha = 0,
\end{equation*}
see \cite{AS:singular}. To do so, entropy solutions are defined relatively to particular stationary solutions of the equation, represented by piecewise constant functions, with discontinuities where $\alpha$ jumps corresponding to the non-conservative product. In fact, the present work may be seen as a continuation of \cite{AS:singular}, in the case of systems.
\end{remark}

Let us go back to the numerical flux $g$, which is a function from $(\Omega\times\R)^2\times S^{d-1}$ to $\R^N$. First, let us define the following function:
\begin{equation}
 \label{eq:Ucvx}
 \begin{aligned}
  \U \colon (\Omega\times\R)^2\times\R\times\R^+ & \longrightarrow \R^N \\
  (w_K,w_L;n,\nu) &\longmapsto u_K - \nu \big( g(w_K,w_L;n) - f(w_K) \bdot n \big)
 \end{aligned}
\end{equation}
which enables to rewrite the numerical scheme~\eqref{eq:FV} as the convex combination
\begin{equation}
 \label{eq:FVcvx}
 u_K^\np = \sum_{L\in\Ns(K)} \frac{|e_{KL}|}{|\dv K|} \U\bigg(w_K^n,w_L^n;n_{KL},\frac{|K|}{\Delta t |\dv K|} \bigg) .
\end{equation}
The numerical flux is subject to the following requirements:
\begin{enumerate}[label=(F\arabic*)]
\item \label{item:Fcons} \textbf{Consistency}. For all $w\in\Omega\times\R$ and all $n\in S^{d-1}$,
\begin{equation*}
 g(w,w;n) = f(w) \bdot n .
\end{equation*}
\item \label{item:Fcv} \textbf{Conservation}. For all $k=1,\dots,N$ such that $s^{(k)}\equiv0$, then for all $u_K,u_L\in\Omega$ and $n\in S^{d-1}$, $g(w_K,w_L;n)^{(k)}=-g(w_L,w_K;-n)^{(k)}$.
\item \label{item:Fpos} \textbf{Admissibility preservation}. There exists $L_g>0$ such that, for any $\nu \leq L_g^{-1}$, we have for all $w_K,w_L\in\Omega\times\R$ and all $n\in S^{d-1}$, 
\begin{equation*}
  \U(w_K,w_L;n,\nu) \in \Omega .
\end{equation*}
\item \label{item:Fent} \textbf{Entropy stability}. There exists an \emph{numerical entropy flux} $G$, from $(\Omega\times\R)^2\times S^{d-1}$ to $\R$, which is conservative, i.e. for all $w_K,w_L\in\Omega\times\R$ and all $n\in S^{d-1}$, 
\begin{equation*}
 G(w_K,w_L;n) = - G(w_L,w_K;-n)
\end{equation*}
and satisfies for any $\nu \leq L_g^{-1}$, all $w_K,w_L\in\Omega\times\R$ and all $n\in S^{d-1}$, 
\begin{equation}
\label{eq:entint}
 \eta(\U(w_K,w_L;n,\nu),\alpha_K) - \eta(w_K) + \nu ( G(w_K,w_L;n) - F(w_K) \bdot n ) \leq 0 .
\end{equation}
%$L_g>0$ such that, for any $\nu\geq L_g$, we have for all $w_K,w_L\in\Omega\times\R$ and all $n\in S^{d-1}$, 
%\begin{multline*}
% \big( \Psi(w_L) - \Psi(w_K) \big) \bdot n \\
% + \dv_u \eta(w_L) \cdot g(w_L,w_K;-n) + \dv_u \eta(w_K) \cdot g(w_K,w_L;n) \geq 0, 
%\end{multline*}
%where $\Psi(w)=\dv_u\eta(w)\cdot f(w) - F(w)$.
\item \label{item:Fwb} \textbf{Well-balancing for stationary states $\S$}. Let $\S$ some subset of $\Omega\times\R$. For all $w_K,w_L\in\S$ and all $n\in S^{d-1}$,
\begin{equation*}
 g(w_K,w_L;n) = f(w_K) \bdot n .
\end{equation*}
\end{enumerate}

The consistency condition \ref{item:Fcons} is very classical, and also ensures that the numerical entropy flux $G$ is consistent with the entropy flux $F$. Condition \ref{item:Fcv} allows to have the discrete conservation for the components of $u$ which satisfy a conservation law. From assumption \ref{item:Fpos} and the convex combination \eqref{eq:FVcvx}, one may deduce 
\begin{equation*}
 (u_K^0)_{K\in\T} \subset \Omega \Longrightarrow \forall n\in\N, \  (u_K^n)_{K\in\T} \subset \Omega,
\end{equation*}
under the Courant--Friedrichs--Lewy (CFL) condition 
\begin{equation}
 \label{eq:CFL}
 \Delta t \leq \inf_{K\in\T} \frac{|K|}{L_g|\dv K|} .
\end{equation}
Condition \ref{item:Fent} leads, under the same CFL condition, to the entropy inequality
\begin{equation}
 \label{eq:dei}
 \eta(w_K^\np) \leq \eta(w_K^n) - \frac{\Delta t}{|K|}  \sum_{L\in\Ns(K)} |e_{KL}| G(w_K^n,w_L^n;n_{KL})
\end{equation}
using the Jensen's inequality after applying $\eta$ to the convex combination \eqref{eq:FVcvx}. The entropy condition \ref{item:Fent} comes from the fundamental work \cite{HLL}. 

The well-balancing condition \ref{item:Fwb} directly yields
\begin{equation*}
 (u_K^0)_{K\in\T} \subset \S \Longrightarrow \forall n\in\N, \forall K\in\T, \ u_K^n=u_K^0 .
\end{equation*}
Well-balanced schemes have been introduced in \cite{greenberg96} and have been successfully developed by many authors, see for instance the books \cite{bouchutbook} and \cite{gossebook}. 
%Note that consistency condition \ref{item:Fwb} corresponds to requiring well-balancing for any singleton $\S$ of $\Omega\times\R$.

% =================================================================
\subsection{An example of well-balanced scheme}
% =================================================================

There exists a huge number of well-balanced schemes in the literature. However, very few satisfy conditions \ref{item:Fcons}--\ref{item:Fwb}, most of the authors only concentrate on \ref{item:Fwb} (and also \ref{item:Fcons} which is straightforward). In particular, condition \ref{item:Fent} may be hard to obtain. One may mention some of them: the non-conservative Godunov scheme \cite{greenberg96}, a modified kinetic scheme \cite{perthamesimeoni}, Suliciu's relaxation method \cite{bouchutbook,css,MR3454365}, entropy-stable schemes \cite{MR2799526}\dots 

Let us present the basic idea from \cite{greenberg96} in the one-dimensional case to construct well-balanced schemes (see also \cite{MR1885615} for a general presentation). First of all, let us recall that, for systems of conservation laws, the Godunov scheme can be interpreted as a two-step method, starting with an initial condition constant in each control volume: in the first step the Cauchy problem is exactly solved, and in the second step, a time step is chosen before any wave interaction and the exact solution is replaced by its mean in each control volume. By the divergence theorem and self-similarity of the solution at each interface, one recovers a finite volume formulation. Here, we follow the same reasoning. 

Consider a space step $h>0$ and an associated one-dimensional uniform mesh $\T=\cup_{i\in\Z}K_i$, with $K_i=(x_\imm,x_\ipp)$ and $x_\ipp=(\ipp)h$. A well-balanced scheme can be constructed as follows. Assume that $(u_i^n,\alpha_i)_{i\in\Z}$ are known:
\begin{enumerate}
 \item Solve the system for $t>0$ and $x\in\R$
\begin{equation*}
  \dv_t u + \dv_x f(u,\alpha) + s(u,\alpha) \alpha_h'(x) = 0 , \\
\end{equation*}
with data
\begin{equation*}
\begin{cases}
 \alpha_h(x) = \sum_{i\in\Z} \alpha_i \un_{K}(x) \\
 u_0(x) = \sum_{i\in\Z} u_i^n \un_{K}(x) 
\end{cases} .
\end{equation*}
We note $u_h(t,x)$ the exact solution. Remark that, at each interface $x_\ipp$, the solution is self-similar since we have locally a Riemann problem.
\item Choose $\Delta t$ such that the waves of each Riemann problem do not interact and apply the classical cell average:
\begin{equation*}
 u_i^\np = \frac{1}{h} \int_{K_i} u_h(\Delta t,x) \dm x.
\end{equation*}
\end{enumerate}
Let us note $u_\ipp(x/t)$ the solution of the Riemann problem at interface $x_\ipp$, with data $(u_i^n,\alpha_i)$ and $(u_\ip^n,\alpha_i)$. Since $\alpha_h$ is constant inside control volumes, one may apply the divergence theorem to get:
\begin{equation}
\label{eq:wb1d}
 u_i^\np = u_i^n - \frac{\Delta t}{h} \big[ f\big(u_\ipp(x/t=0^-)\big) - f\big(u_\imm(x/t=0^+)\big)\big] .
\end{equation}
As mentioned above, in general $f\big(u_\ipp(0^-)\big)\neq f\big(u_\ipp(0^+)\big)$ due to the non-conservative contribution of the source term, localized at each interface (see for instance \cite{MR1885615} for more details). On the other hand, assuming that the solution $u_\ipp$ is admissible and entropy satisfying, then the numerical flux also satisfies assumption \ref{item:Fcons} to \ref{item:Fwb}. It is worth noting that the calculation in step 1 of $u_h$ may be difficult. The extension to the multidimensional case on unstructured meshes is straightforward by extension of the finite volume form of the scheme \eqref{eq:wb1d}. We refer to \cite{chinnayya04} for more details, in the context of the shallow-water equations with bathymetry. 

% =================================================================
\subsection{Numerical dissipation and relative entropy}
% =================================================================

We now focus on the discrete version of the stability result stated in theorem~\ref{thm:stab}. To begin, let us give some details on the entropy dissipation of numerical schemes. For the study of time-continuous schemes, Tadmor introduced in \cite{MR890255} the function
\begin{equation}
\label{eq:gamma}
 \Gamma(w_{K},w_{L};n) = F(w_K) \bdot n + \dv_{u} \eta(w_{K}) \cdot \big( g(w_{K},w_{L};n) - f(w_{K}) \bdot n \big) .
\end{equation}
From assumption \ref{item:Fent} and following \cite{CMS}, one can prove:
% -----------------------------------------------------------------
\begin{lemma}
 \label{lem:flux}
 For all $(w_K,w_L)\in(\Omega\times\R)^2$ and all $n\in S^{d-1}$, we have for all $\nu\leq L_g^{-1}$
\begin{equation}
  \label{eq:dissflux}
  \Gamma(w_{K},w_{L};n) - G(w_{K},w_{L};n) \geq \frac{\nu\underline\eta}{2} \big| g(w_{K},w_{L};n) - f(w_{K}) \bdot n \big|^2 .
\end{equation}
\end{lemma}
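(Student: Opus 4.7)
The plan is to combine the entropy-stability axiom \ref{item:Fent} with the definition \eqref{eq:gamma} of $\Gamma$ algebraically, recognize the resulting expression as the relative entropy $h(\U,u_K,\alpha_K)$, and then apply the quadratic lower bound coming from the uniform convexity~\eqref{eq:eta2}.

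In detail, I would first rewrite \ref{item:Fent} as
$$\nu\bigl(F(w_K)\bdot n - G(w_K,w_L;n)\bigr) \geq \eta\bigl(\U(w_K,w_L;n,\nu),\alpha_K\bigr) - \eta(w_K),$$
valid for $\nu\leq L_g^{-1}$. I then add $\nu\,\dv_u\eta(w_K)\cdot(g(w_K,w_L;n)-f(w_K)\bdot n)$ to both sides. On the left-hand side, definition~\eqref{eq:gamma} shows that the result is exactly $\nu\bigl(\Gamma(w_K,w_L;n) - G(w_K,w_L;n)\bigr)$. On the right-hand side, using $\U - u_K = -\nu\bigl(g(w_K,w_L;n) - f(w_K)\bdot n\bigr)$ from~\eqref{eq:Ucvx}, the added term is $-\dv_u\eta(u_K,\alpha_K)\cdot(\U - u_K)$, so the right-hand side becomes
$$\eta(\U,\alpha_K) - \eta(u_K,\alpha_K) - \dv_u\eta(u_K,\alpha_K)\cdot(\U - u_K) \;=\; h(\U,u_K,\alpha_K),$$
by Definition~\ref{def:er}.

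To close the argument, I would use a second-order Taylor expansion of $u\mapsto\eta(u,\alpha_K)$ around $u_K$ with integral remainder. Assumption~\ref{item:H1} yields $\dv_u^2\eta \geq \underline\eta\,\Id$ on $\Omega$, and the admissibility condition~\ref{item:Fpos} guarantees $\U\in\Omega$ under the CFL bound $\nu\leq L_g^{-1}$, so the expansion is legitimate. It delivers $h(\U,u_K,\alpha_K)\geq\tfrac{\underline\eta}{2}|\U-u_K|^2$, and plugging in $|\U - u_K|^2 = \nu^2|g(w_K,w_L;n)-f(w_K)\bdot n|^2$ and dividing by $\nu>0$ produces the claimed inequality.

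There is no serious obstacle here: the proof is a one-line algebraic rearrangement followed by standard strong-convexity bookkeeping. The only point deserving a little care is to appeal to the integral Taylor remainder directly — which naturally supplies the factor $1/2$ in the statement — rather than the slightly coarser bound recorded in Lemma~\ref{lem:h}, and to invoke~\ref{item:Fpos} so that $\eta(\U,\alpha_K)$ is defined and the convexity estimate on $\eta(\cdot,\alpha_K)$ applies.
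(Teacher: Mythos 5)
Your proof is correct and follows essentially the same route as the paper's: substitute the definition of $\Gamma$ into the entropy stability inequality \ref{item:Fent}, recognize the left-over terms as the relative entropy of $\U$ with respect to $u_K$, and bound it below via the uniform convexity \eqref{eq:eta2}. Your explicit appeal to the integral Taylor remainder (yielding the factor $1/2$, which the cruder bound of Lemma~\ref{lem:h} would not give) and to \ref{item:Fpos} for $\U\in\Omega$ is a slightly more careful write-up of exactly what the paper does.
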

% -----------------------------------------------------------------
% -----------------------------------------------------------------
\begin{proof}
We inject the definition \eqref{eq:gamma} of $\Gamma$ in the entropy flux inequality \eqref{eq:entint}, and obtain
\begin{multline*}
  \eta(\U(w_K,w_L;n,\nu),\alpha_K) - \eta(w_K) \\ + \nu \big( \dv_{u} \eta(w_{K}) \cdot ( g(w_{K},w_{L};n) - f(w_{K}) \bdot n ) \big) \\\leq \nu ( \Gamma(w_K,w_L;n) - G(w_{K},w_{L};n) ) .
\end{multline*}
By definition~\eqref{eq:Ucvx} of $\U$ and using the strict convexity of $\eta$ w.r.t. its first variable, see \ref{item:H1}, it results
\begin{equation*}
  \frac{\nu^2 \underline\eta}{2} | g(w_{K},w_{L};n) - f(w_{K}) \bdot n |^2 \leq \nu ( \Gamma(w_K,w_L;n) - G(w_{K},w_{L};n) ) ,
\end{equation*}
which is exactly the expected inequality.
\end{proof}
% -----------------------------------------------------------------
We are now in position to measure the numerical dissipation of entropy satisfying finite volume schemes:
% -----------------------------------------------------------------
\begin{proposition}
\label{prop:numdiss}
Consider a finite volume scheme \eqrefs{eq:icFV}{eq:FV} with a numerical flux which satisfies assumptions from \ref{item:Fcons} to \ref{item:Fent}. If there exists $\zeta\in(0,1)$ such that
\begin{equation}
\label{eq:CFLz}
\Delta t \leq (1-\zeta) \frac{\underline\eta}{\bar\eta} \frac{a^2 h}{L_g} ,
\end{equation}
then the approximate solution satisfies the discrete entropy inequality
\begin{multline}
\label{eq:numdiss}
 \eta(w_K^\np) - \eta(w_K^n) + \frac{\Delta t}{|K|} \sum_{L\in\Ns(K)} |e_{KL}| G(w_K^n,w_L^n;n_{KL}) \\
 \leq -\zeta \frac{\underline\eta\Delta t}{2|K|L_g} \sum_{L\in\Ns(K)} |e_{KL}| |g(w_K^n,w_L^n;n_{KL}) - f(w_K^n) \bdot n_{KL}|^{2} .
\end{multline}
\end{proposition}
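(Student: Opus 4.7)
The plan is to refine the standard Jensen-plus-entropy-stability derivation of~\eqref{eq:dei} by using a direct second-order Taylor expansion of $\eta(\cdot,\alpha_K)$ at $u_K^n$. This replaces Jensen by a quantitative upper bound; combined with the identity $\sum_{L\in\Ns(K)}|e_{KL}|n_{KL}=0$ the linear part collapses to a sum of $\Gamma$'s, and Lemma~\ref{lem:flux} then immediately produces the dissipation. The only real work is to show that the resulting quadratic remainder is absorbed by the dissipation under the sharper CFL~\eqref{eq:CFLz}.

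Concretely, the upper Hessian bound in~\ref{item:H1} gives
\begin{equation*}
 \eta(w_K^\np) \leq \eta(w_K^n) + \dv_u \eta(w_K^n)\cdot (u_K^\np - u_K^n) + \frac{\overline\eta}{2}\,|u_K^\np - u_K^n|^2 .
\end{equation*}
Using~\eqref{eq:FV} for $u_K^\np-u_K^n$, the definition~\eqref{eq:gamma} of $\Gamma$, and the geometric identity $\sum_{L\in\Ns(K)}|e_{KL}|n_{KL}=0$ (which makes the $F(w_K^n)\bdot n_{KL}$ and $\dv_u\eta(w_K^n)\cdot f(w_K^n)\bdot n_{KL}$ contributions vanish after summation), the linear term reduces to $-(\Delta t/|K|)\sum_{L\in\Ns(K)}|e_{KL}|\Gamma(w_K^n,w_L^n;n_{KL})$. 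Applying Lemma~\ref{lem:flux} at its maximal admissible value $\nu=L_g^{-1}$ then yields, for each $L\in\Ns(K)$,
\begin{equation*}
 -\Gamma(w_K^n,w_L^n;n_{KL}) \leq -G(w_K^n,w_L^n;n_{KL}) - \frac{\underline\eta}{2L_g}\,\big|g(w_K^n,w_L^n;n_{KL}) - f(w_K^n)\bdot n_{KL}\big|^2 ,
\end{equation*}
which already produces the discrete entropy flux together with a dissipation at the expected strength $\underline\eta/(2L_g)$.

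The last, and main, task is to absorb the quadratic Taylor remainder. Using again $\sum_L|e_{KL}|n_{KL}=0$ to write $u_K^\np-u_K^n=-(\Delta t/|K|)\sum_L |e_{KL}|(g(w_K^n,w_L^n;n_{KL})-f(w_K^n)\bdot n_{KL})$, Cauchy--Schwarz with weights $|e_{KL}|^{1/2}$ gives
\begin{equation*}
 |u_K^\np - u_K^n|^2 \leq \frac{\Delta t^2\,|\dv K|}{|K|^2}\sum_{L\in\Ns(K)} |e_{KL}|\,\big|g(w_K^n,w_L^n;n_{KL}) - f(w_K^n)\bdot n_{KL}\big|^2 .
\end{equation*}
Collecting terms, the net coefficient of $(\Delta t/|K|)\sum_L |e_{KL}|\,|g-f\bdot n|^2$ is $-\underline\eta/(2L_g) + \overline\eta\,\Delta t\,|\dv K|/(2|K|)$. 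The mesh-regularity bound $|\dv K|/|K|\leq 1/(a^2 h)$ from~\eqref{eq:meshreg} combined with the CFL~\eqref{eq:CFLz} shows this coefficient is at most $-\zeta\,\underline\eta/(2L_g)$, which is exactly~\eqref{eq:numdiss}.
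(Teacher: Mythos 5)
Your proof is correct and follows essentially the same route as the paper's: the second-order Taylor bound with $\overline\eta$ from \ref{item:H1}, reduction of the linear term to $\sum_{L}|e_{KL}|\Gamma(w_K^n,w_L^n;n_{KL})$ via $\sum_{L}|e_{KL}|n_{KL}=0$, Lemma~\ref{lem:flux} at $\nu=L_g^{-1}$, and absorption of the quadratic remainder through \eqref{eq:meshreg} and the strengthened CFL condition \eqref{eq:CFLz}. Your weighted Cauchy--Schwarz estimate $|u_K^\np-u_K^n|^2\leq \Delta t^2\,|\dv K|\,|K|^{-2}\sum_{L}|e_{KL}|\,|g-f\bdot n_{KL}|^2$ is a slightly cleaner form of the paper's intermediate bound involving $\sum_{L}|e_{KL}|^2$, and both land on the same final constant $-\zeta\underline\eta/(2L_g)$.
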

% -----------------------------------------------------------------

% -----------------------------------------------------------------
\begin{proof}
First, let us remark that the upper bound \eqref{eq:eta2} on the spectral radius of the Hessian of $\eta$ w.r.t. $u$ leads to inequality
\begin{equation*}
 \eta(u_K^\np,\alpha_K) - \eta(u_K^n,\alpha_K) - \dv_{u}\eta(u_K^n,\alpha_K) \cdot (u_K^\np - u_K^n) \leq \frac{\bar\eta}{2} |u_K^\np-u_K^n|^2,
\end{equation*}
which, using the numerical scheme~\eqref{eq:FV}, yields
\begin{multline*}
 \eta(w_K^\np) - \eta(w_K^n) + \frac{\Delta t}{|K|} \sum_{L\in\Ns(K)} |e_{KL}| \dv_{u}\eta(w_K^{n}) \cdot g(w_K^n,w_L^n;n_{KL}) \\
 \leq \frac{\bar\eta}{2} \frac{\Delta t^2}{|K|^2} \sum_{L\in\Ns(K)} |e_{KL}|^2 |g(w_K^n,w_L^n;n_{KL}) - f(w_K^n) \bdot n_{KL}|^{2}.
\end{multline*}
Moreover, by definition of $\Gamma$ and from the divergence theorem, it results
\begin{equation*}
  \sum_{L\in\Ns(K)} |e_{KL}| \dv_{u}\eta(w_K^{n}) \cdot g(w_K^n,w_L^n;n_{KL}) = \sum_{L\in\Ns(K)} |e_{KL}| \cdot \Gamma(w_K^n,w_L^n;n_{KL}) ,
\end{equation*}
and thus, by lemma \ref{lem:flux}, the previous inequality becomes
\begin{multline*}
 \eta(w_K^\np) - \eta(w_K^n) + \frac{\Delta t}{|K|} \sum_{L\in\Ns(K)} |e_{KL}| \big( G(w_K^n,w_L^n;n_{KL}) \\
 + \frac{\underline\eta}{2L_g} |g(w_K^n,w_L^n;n_{KL}) - f(w_K^n)\bdot n_{KL}|^2 \big) \\
 \leq \frac{\bar\eta}{2} \frac{\Delta t^2}{|K|^2} \sum_{L\in\Ns(K)} |e_{KL}|^2 |g(w_K^n,w_L^n;n_{KL}) - f(w_K^n) \bdot n_{KL}|^{2}.
\end{multline*}
Using the isoperimetric assumption on the mesh \eqref{eq:meshreg}, one obtains successively
\begin{align*}
 &\eta(w_K^\np) - \eta(w_K^n) + \frac{\Delta t}{|K|} \sum_{L\in\Ns(K)} |e_{KL}| G(w_K^n,w_L^n;n_{KL}) \\
 &\leq \frac{\Delta t}{2|K|} \bigg(\frac{\bar\eta\Delta t}{|K|}\frac{h^{d-1}}{a} - \frac{\underline\eta}{2L_g} \bigg) \sum_{L\in\Ns(K)} |e_{KL}| |g(w_K^n,w_L^n;n_{KL}) - f(w_K^n) \bdot n_{KL}|^{2} \\
 &\leq \frac{\Delta t}{2|K|} \bigg(\frac{\bar\eta\Delta t}{a^2h} - \frac{\underline\eta}{L_g} \bigg) \sum_{L\in\Ns(K)} |e_{KL}| |g(w_K^n,w_L^n;n_{KL}) - f(w_K^n) \bdot n_{KL}|^{2} ,
\end{align*}
which, by the strengthened CFL condition \eqref{eq:CFLz}, provides inequality \eqref{eq:numdiss}.
\end{proof}
% -----------------------------------------------------------------
Inequality~\eqref{eq:numdiss} includes an lower bound for the numerical dissipation, which necessitates the use the CFL condition~\eqref{eq:CFLz}, which is strictly more restrictive than~\eqref{eq:CFL}. Note that it has been obtained without assuming the well-balanced property \ref{item:Fwb}. With this property, we obtain:
% -----------------------------------------------------------------
\begin{theorem}
  \label{thm:stabnum}
Let $H_0\in\R^N$ and consider the set $\S(H_0)$ defined by \ref{item:S1} and \ref{item:S2}, assumed to be nonempty. Consider $(\alpha_K)_{K\in\T}\subset\R$ and $(v_K)_{K\in\T}\subset\Omega$ such that for all $K\in\T$, $(v_K,\alpha_K)\in\S(H_0)$. Assume that the finite volume scheme \eqrefs{eq:icFV}{eq:FV} is defined by a numerical flux which satisfies assumptions from \ref{item:Fcons} to \ref{item:Fent}, and \ref{item:Fwb} related to $\S(H_0)$, and that the CFL stability condition~\eqref{eq:CFLz} holds true. Then, for any $u_0\in\BV(\R^d,\Omega)^N$, one has for all $K\in\T$ and $n\in\N$
\begin{multline}
\label{eq:numdissh}
 h(u_K^\np,v_K,\alpha_K) - h(u_K^n,v_K,\alpha_K) \\
 + \frac{\Delta t}{|K|} \sum_{L\in\Ns(K)} |e_{KL}| \G_{H_0}((u_K^n,\alpha_K),(u_L^n,\alpha_L);n_{KL}) \leq \D_K^n ,
\end{multline}
where $\D_K^n$ in the right-hand side of~\eqref{eq:numdiss}, $h$ the relative entropy introduced in definition~\ref{def:er} and the numerical flux $\G_{H_0}$ is given by
\begin{equation}
 \G_{H_0}(w_K,w_L;n) = G(w_K,w_L;n) - H_0 \cdot g(w_K,w_L;n)
\end{equation}
and is conservative: $\G_{H_0}(w_K,w_L;n) = -\G_{H_0}(w_L,w_K;-n)$.
\end{theorem}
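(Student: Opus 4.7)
The plan is to combine the discrete entropy inequality from Proposition \ref{prop:numdiss} with the $H_0$-projection of the scheme itself, exploiting the fact that on $\S(H_0)$ the entropy variable $\dv_u\eta(v_K,\alpha_K)=H_0$ is a single constant vector that depends neither on $n$ nor on $K$. Since $(v_K,\alpha_K)$ is time-independent, the definition of the relative entropy in Definition \ref{def:er} yields the algebraic identity
\begin{equation*}
 h(u_K^\np,v_K,\alpha_K) - h(u_K^n,v_K,\alpha_K) = \eta(w_K^\np)-\eta(w_K^n) - H_0\cdot(u_K^\np-u_K^n),
\end{equation*}
because the terms $\eta(v_K,\alpha_K)$ and $H_0\cdot v_K$ cancel in the difference.

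Next I would use the scheme \eqref{eq:FV} directly by taking the scalar product with $H_0$ to express
\begin{equation*}
 H_0\cdot(u_K^\np-u_K^n) = -\frac{\Delta t}{|K|}\sum_{L\in\Ns(K)}|e_{KL}|\,H_0\cdot g(w_K^n,w_L^n;n_{KL}).
\end{equation*}
Plugging this into the identity above and estimating $\eta(w_K^\np)-\eta(w_K^n)$ by the discrete entropy inequality \eqref{eq:numdiss} of Proposition \ref{prop:numdiss} gives exactly \eqref{eq:numdissh} with the numerical flux $\G_{H_0}(w_K,w_L;n)=G(w_K,w_L;n)-H_0\cdot g(w_K,w_L;n)$ and with $\D_K^n$ equal to the right-hand side of \eqref{eq:numdiss}. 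Note that the CFL condition \eqref{eq:CFLz} is invoked only through Proposition \ref{prop:numdiss}, and the well-balanced property \ref{item:Fwb} is not used in the derivation of \eqref{eq:numdissh} itself; it only guarantees that $(v_K)_{K\in\T}$ is actually a fixed point of the scheme, which is the content that makes the inequality meaningful.

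Finally, one must verify that $\G_{H_0}$ is conservative, $\G_{H_0}(w_K,w_L;n)=-\G_{H_0}(w_L,w_K;-n)$. The conservativity of $G$ is given by \ref{item:Fent}, so the question reduces to showing that $H_0\cdot g$ is conservative. This is the only place where the structural assumption \ref{item:S2} enters: it states $H_0^{(k)}s_i^{(k)}\equiv0$ for all $i,k$, so that whenever $H_0^{(k)}\neq 0$ the source component $s_i^{(k)}$ vanishes identically, and assumption \ref{item:Fcv} then gives $g^{(k)}(w_K,w_L;n)=-g^{(k)}(w_L,w_K;-n)$. Summing over $k$ with the weights $H_0^{(k)}$ (the terms with $H_0^{(k)}=0$ contributing nothing) produces the required antisymmetry.

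The derivation is essentially bookkeeping; the only subtlety, and the step most likely to trip one up, is this last componentwise interleaving of \ref{item:Fcv} and \ref{item:S2} to obtain conservativity of $H_0\cdot g$. Everything else amounts to linearity of the scheme update against the constant vector $H_0$, which is precisely what makes the entropy variable framework of $\S(H_0)$ compatible with the conservative form of the discrete inequality.
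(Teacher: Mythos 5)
Your proposal is correct and follows essentially the same route as the paper's proof: expand $h(u_K^{n+1},v_K,\alpha_K)-h(u_K^n,v_K,\alpha_K)$ using \ref{item:S1} to replace $\dv_u\eta(v_K,\alpha_K)$ by $H_0$, pair the scheme tested against $H_0$ with the dissipative entropy inequality of Proposition~\ref{prop:numdiss}, and deduce conservativity of $\G_{H_0}$ from \ref{item:S2} combined with \ref{item:Fcv}. You even make explicit the componentwise argument for the antisymmetry of $H_0\cdot g$, which the paper states in one line, and your observation that \ref{item:Fwb} is not needed for \eqref{eq:numdissh} itself is consistent with the paper's actual proof.
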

% -----------------------------------------------------------------

% -----------------------------------------------------------------
\begin{proof}
 The proof is straightforward, using proposition~\ref{prop:numdiss}. Indeed, one has
\begin{align*}
 & h(u_K^\np,v_K,\alpha_K) - h(u_K^n,v_K,\alpha_K) \\ 
 &= \eta(u_K^\np,\alpha_K) - \eta(u_K^n,\alpha_K) - \dv_u \eta(v_K,\alpha_K) \cdot ( u_K^\np - u_K^n ) \\
 &\leq - \frac{\Delta t}{|K|} \sum_{L\in\Ns(K)} |e_{KL}| G((u_K^n,\alpha_K),(u_L^n,\alpha_L);n_{KL}) + \D_K^n \\
 &\quad + \frac{\Delta t}{|K|} \sum_{L\in\Ns(K)} |e_{KL}| H_0 \cdot g((u_K^n,\alpha_K),(u_L^n,\alpha_L);n_{KL}) 
\end{align*}
which exactly is~\eqref{eq:numdiss}. The conservative property of $\G_{H_0}$ is due to the conservative property of $G$ and the combination of assumptions~\ref{item:S2} and~\ref{item:Fcv}.
\end{proof}
% -----------------------------------------------------------------
A straightforward corollary of this theorem is a discrete version of the nonlinear stability theorem~\ref{thm:stab}:
% -----------------------------------------------------------------
\begin{corollary}
 Under the same notations and assumptions as in theorem~\ref{thm:stabnum}, if, for some $n\in\N^*$, $(u_K^n)_{K\in\T}\not\subset\S(H_0)$, then
\begin{equation}
\label{eq:numdissinth}
 \sum_{K\in\T} |K| h(u_K^\np,v_K,\alpha_K) <  \sum_{K\in\T} |K| h(u_K^n,v_K,\alpha_K) .
\end{equation}
\end{corollary}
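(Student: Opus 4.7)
The plan is to derive \eqref{eq:numdissinth} directly from Theorem~\ref{thm:stabnum} by summation. I would multiply the cell-wise inequality~\eqref{eq:numdissh} by $|K|$ and sum over all $K\in\T$. Two effects must then be disentangled on the right-hand side: the cancellation of the flux term, and the dissipative sign of $\D_K^n$.

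For the flux contribution, each interface $e_{KL}$ between two neighboring control volumes appears twice in the double sum $\sum_{K\in\T}\sum_{L\in\Ns(K)} |e_{KL}|\G_{H_0}((u_K^n,\alpha_K),(u_L^n,\alpha_L);n_{KL})$. Invoking the conservation property of $\G_{H_0}$ established at the end of Theorem~\ref{thm:stabnum}, the contributions from $K$ (outer normal $n_{KL}$) and from $L$ (outer normal $n_{LK}=-n_{KL}$) cancel pairwise, so the full double sum vanishes. Note that assumption~\ref{item:S2} is essential twice here: once to guarantee that $H_0\cdot g$ is indeed conservative in the relevant components (through~\ref{item:Fcv}), and once for the telescoping itself.

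The term $\D_K^n$, given by the right-hand side of~\eqref{eq:numdiss}, is manifestly non-positive: it is $-\zeta\underline\eta\Delta t/(2|K|L_g)$ times a sum of non-negative squared deviations $|g(w_K^n,w_L^n;n_{KL})-f(w_K^n)\bdot n_{KL}|^2$. Summing then produces
\[
 \sum_{K\in\T} |K|\, h(u_K^\np,v_K,\alpha_K) \leq \sum_{K\in\T} |K|\, h(u_K^n,v_K,\alpha_K) + \sum_{K\in\T} |K|\, \D_K^n,
\]
with the rightmost sum non-positive, which already delivers the non-strict version of the estimate.

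The only remaining issue, and the main subtlety, is upgrading to a strict inequality under the hypothesis $(u_K^n)_{K\in\T}\not\subset\S(H_0)$. I would argue by contrapositive: if $\sum_K|K|\D_K^n = 0$, then every squared deviation vanishes, so $g(w_K^n,w_L^n;n_{KL}) = f(w_K^n)\bdot n_{KL}$ at every interface, and substituting into the update~\eqref{eq:FV} gives $u_K^\np = u_K^n$ for every $K$; that is, $(u_K^n)$ is a discrete fixed point of the scheme. The expected obstacle is that, strictly speaking, assumptions~\ref{item:Fcons}--\ref{item:Fwb} only force states of $\S(H_0)$ to be fixed points of the scheme, not the converse. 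Under the natural non-degeneracy property that no spurious discrete equilibria exist outside $\S(H_0)$ — a property enjoyed by the well-balanced constructions recalled in Section~\ref{sec:num} — the fixed-point condition contradicts $(u_K^n)\not\subset\S(H_0)$, forcing $\sum_K|K|\D_K^n<0$ and hence~\eqref{eq:numdissinth}.
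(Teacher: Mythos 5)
Your summation argument is exactly the intended one: multiply \eqref{eq:numdissh} by $|K|$, sum over $K\in\T$, let the flux contribution telescope thanks to the conservativity of $\G_{H_0}$ (which, as you note, rests on \ref{item:S2} combined with \ref{item:Fcv} for the components where $H_0^{(k)}\neq0$; the components with $H_0^{(k)}=0$ simply do not contribute to $H_0\cdot g$, so the property is really used once, not twice), and use $\D_K^n\leq0$. This yields the non-strict inequality and is what the paper treats as ``straightforward''. (Both you and the paper leave aside the fact that the sum over the infinite mesh of $\R^d$ must be given a meaning, e.g.\ by assuming $u_K^0=v_K$ outside a bounded set; this is a separate, minor point.)

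The real content of your write-up is the strictness, and your diagnosis is correct: with only \ref{item:Fcons}--\ref{item:Fwb}, equality can occur at a state $(u_K^n)_{K\in\T}\not\subset\S(H_0)$. Indeed $\sum_K|K|\D_K^n=0$ forces $g(w_K^n,w_L^n;n_{KL})=f(w_K^n)\bdot n_{KL}$ at every interface, hence $u_K^{\np}=u_K^n$ by \eqref{eq:FV} (the boundary normals of each cell sum to zero), and the weighted sum of $h$ is then unchanged; nothing in \ref{item:Fcons}--\ref{item:Fwb} forbids such a fixed point outside $\S(H_0)$ --- for a conservative system with the Godunov flux, a discrete stationary shock sitting on an interface is one. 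The paper itself supplies the missing converse well-balancing hypothesis only in the following corollary, as \eqref{eq:vKuniq}, and your ``non-degeneracy'' assumption is precisely a (slightly weaker) version of it. So your argument is sound, but it establishes the corollary under an additional hypothesis that the statement does not contain; you should make that explicit --- state the converse of \ref{item:Fwb} as a hypothesis rather than call it ``natural'' --- or note that the strict inequality is only exploited later in conjunction with \eqref{eq:vKuniq}, where your reasoning closes the gap.
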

% -----------------------------------------------------------------

It is worth noting that inequality~\eqref{eq:numdissinth} is strict, contrary to~\eqref{eq:stabt}, this is due to the numerical dissipation, represented by $\D_K^n$. Besides, it is important to note that the cancellation of the dissipation term $\D_K^n$ is related to the well-balancing property~\ref{item:Fwb}, which may lead to the following result of asymptotic stability:

% -----------------------------------------------------------------
\begin{corollary}
\label{cor:stdec}
 Let $(\alpha_K)_{K\in\T}\subset\R$ and $(u_K^0)_{K\in\T}\subset\Omega$ be given. Assume that there exists $H_0\in\R^N$ for which the set $\S(H_0)$ defined by \ref{item:S1} and \ref{item:S2} is nonempty and such that there exists a unique $(v_K)_{K\in\T}\subset\Omega$ satisfying $(v_K,\alpha_K)\in\S(H_0)$ for all $K\in\T$, and
\begin{equation}
\label{eq:compu0v}
 \sum_{K\in\T} |K| (v_K)^{(k)} = \sum_{K\in\T} |K| (u^0_K)^{(k)}
\end{equation}
for all component $k=1,\dots,N$ for which $s^{(k)}\equiv0$. \\
 Consider a finite volume scheme \eqrefs{eq:icFV}{eq:FV} defined by a numerical flux which satisfies assumptions from \ref{item:Fcons} to \ref{item:Fent}. Besides, we assume the well-balancing property \ref{item:Fwb}, but also its converse: let $(\bar u_K)_{K\in\T}\subset\Omega$, we assume that
\begin{equation}
\label{eq:vKuniq}
\begin{aligned}
& \forall K,L\in\T, \quad
g((\bar u_K,\alpha_K),(\bar u_L,\alpha_L);n) = f(\bar u_K,\alpha_K) \\
\Longrightarrow \quad& \forall K\in\T, \quad \bar u_K=v_K .
\end{aligned}
\end{equation}
Then, under the CFL stability condition~\eqref{eq:CFLz}, we have
\begin{equation}
 \label{eq:convnum}
 \lim_{n\to\infty} \sum_{K\in\T} |K| h(u_K^n,v_K,\alpha_K) = 0 .
\end{equation} 
In other words, the approximate solution provided by the finite volume \eqrefs{eq:icFV}{eq:FV} tends uniformly to the approximate stationary solution $(v_K)_{k\in\T}$ when $n$ tends to $+\infty$.
% Let $\alpha$ and $u_0$ be given, such that for all $k=1,\dots,N$ for which $s^{(k)}\equiv0$, then $M^{(k)}\int (u_0)^{(k)}(x)\ \dm x<+\infty$. Assume that there exists $H_0\in\R^N$ for which the set $\S(H_0)$ defined by \ref{item:S1} and \ref{item:S2} is nonempty and such that there exists a unique $v$ satisfying $(v,\alpha)\in\S(H_0)$ almost everywhere. \\
% Consider the associated finite volume approximations $(\alpha_K)_{K\in\T}$, $(u_K^0)_{K\in\T}$, and $(v_K)_{K\in\T}$. Then for all $K\in\T$, $(v_K,\alpha_K)\in\S(H_0)$. 
\end{corollary}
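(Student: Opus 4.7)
The plan is to treat $E_n := \sum_{K\in\T} |K|\, h(u_K^n,v_K,\alpha_K)$ as a Lyapunov functional. First, I would sum the cellwise inequality~\eqref{eq:numdissh} over all $K\in\T$. The numerical entropy flux $G$ is conservative, and assumption~\ref{item:S2} forces $H_0^{(k)}=0$ on every component where some $s_i^{(k)}\not\equiv 0$, so the remaining $H_0\cdot g$ pieces fall exactly into the conservative block handled by~\ref{item:Fcv}. Thus the interface sum involving $\G_{H_0}$ telescopes to zero, leaving only the dissipation:
$$E_{n+1}-E_n \leq -\zeta\frac{\underline\eta\Delta t}{2L_g}\sum_{K\in\T}\sum_{L\in\Ns(K)}|e_{KL}|\,\big|g(w_K^n,w_L^n;n_{KL})-f(w_K^n)\bdot n_{KL}\big|^{2}\leq 0.$$
Since $E_n\geq 0$ is non-increasing, it converges to some $E_\infty\geq 0$, and summing the above inequality telescopically makes the flux-defect series finite. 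In particular, at every fixed interface $e_{KL}$, the defect $g(w_K^n,w_L^n;n_{KL}) - f(w_K^n)\bdot n_{KL}$ tends to $0$ as $n\to\infty$.

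Second, I would identify the only possible cluster point. The bound $|u_K^n-v_K|^2\leq E_0/(\underline\eta|K|)$ provided by Lemma~\ref{lem:h} gives cellwise boundedness of $(u_K^n)_n$, so a diagonal extraction over the countable family $\T$ yields a subsequence $n_j\to\infty$ and cellwise limits $u_K^\star\in\overline\Omega$. Passing to the limit in the vanishing defect, using continuity of $g$ and $f$, gives
$$g\big((u_K^\star,\alpha_K),(u_L^\star,\alpha_L);n_{KL}\big) = f(u_K^\star,\alpha_K)\bdot n_{KL}\quad\text{for all } K\in\T,\ L\in\Ns(K).$$
The converse well-balancing hypothesis~\eqref{eq:vKuniq} then forces $u_K^\star=v_K$ for every $K$. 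Because every cellwise convergent subsequence has this same limit, the whole sequence $(u_K^n)$ converges cellwise to $(v_K)$.

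The final step is to upgrade cellwise convergence $u_K^n\to v_K$ into the weighted convergence $E_n\to 0$. For a finite mesh this is immediate from the cellwise continuity of $h$ and Lemma~\ref{lem:h}. In the infinite-mesh case one must prevent entropy from escaping to infinity, and this is where I expect the main technical difficulty to lie: one has to combine the monotone decay of $E_n$, the cellwise decay to zero, and the discrete conservation of the components with $s^{(k)}\equiv 0$ (anchored globally by~\eqref{eq:compu0v}) to rule out that a tail $\sum_{K\notin\T_0} |K|\,h(u_K^n,v_K,\alpha_K)$ remains bounded away from zero as $\T_0$ exhausts $\T$. Once this tightness is established, $E_\infty=0$, which is precisely~\eqref{eq:convnum}.
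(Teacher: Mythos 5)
Your proposal follows the same route as the paper: the quantity $E_n=\sum_K|K|\,h(u_K^n,v_K,\alpha_K)$ is a Lyapunov functional (monotone by summing \eqref{eq:numdissh} over $K$ and using the conservativity of $\G_{H_0}$), the summability of the dissipation forces the flux defects $g(w_K^n,w_L^n;n_{KL})-f(w_K^n)\bdot n_{KL}$ to vanish, and the converse well-balancing hypothesis \eqref{eq:vKuniq} identifies the unique accumulation point as $(v_K)_K$. In fact you are considerably more explicit than the paper, whose proof is a three-sentence LaSalle-type sketch ("$V$ is a Lyapunov functional, $(v_K)$ is the only fixed point, conclude"); your steps 1--3 are exactly the details that sketch suppresses, and they are correct.

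Two remarks on the points you leave open. First, the tightness issue you flag for an infinite mesh is genuine, but it is not resolved in the paper either: the paper's compactness/LaSalle argument implicitly assumes finitely many cells, and indeed its only application is on a bounded polygonal domain $D$ with wall boundary conditions, where $\T$ is finite and your step 4 closes immediately. So you have not missed an idea that the paper supplies; you have correctly located the place where the statement, read literally over a mesh of $\R^d$, would need an additional no-escape-to-infinity argument (and where even the finiteness of $E_0$ requires a decay assumption on $u_0-v$). Second, note that as literally written \eqref{eq:vKuniq} already pins down the fixed point without invoking \eqref{eq:compu0v}, which is why \eqref{eq:compu0v} only enters your argument through tightness; the paper instead uses \eqref{eq:compu0v} together with the discrete conservation from \ref{item:Fcv} to argue that $(v_K)$ is the fixed point \emph{reachable} from $(u_K^0)$. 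The intended reading (e.g.\ for the lake at rest, where every constant surface level gives a fixed point) is that uniqueness in \eqref{eq:vKuniq} holds within the class of states sharing the conserved totals of $u^0$; your limit points automatically lie in that class by conservativity, so your identification step is consistent with the paper's once this reading is adopted.
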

% -----------------------------------------------------------------
% -----------------------------------------------------------------
\begin{proof}
This proof consists in proving that 
\begin{equation*}
 V \colon (u_K^n)_{K\in\T} \longmapsto \sum_{K\in\T} |K| h(u_K^n,v_K,\alpha_K)
\end{equation*}
is a Lyapunov functional for the numerical scheme, relative to the stationary state $(v_K)_{K\in\T}$. According to condition~\eqref{eq:vKuniq}, the stationary state $(v_K)_K$ is the only fixed point of the numerical scheme. Moreover, thanks to \eqref{eq:compu0v}, the state $(v_K)_{K\in\T}$ can be attained from $(u_0)_K$, the numerical scheme being conservative for these components $k$. To conclude, it is sufficient to apply corollary~\ref{cor:stdec}, and the convexity property of the relative entropy $h$ stated in lemma~\ref{lem:h}.
\end{proof}
% -----------------------------------------------------------------

% =================================================================
\subsection{An example of numerical asymptotic stability}
% =================================================================

Let us provide a concrete application of the latter corollary. Let us go back to the two-dimensional shallow-water equations with bathymetry~\eqref{eq:stv}. In order to be in a configuration with only one possible ``lake at rest" stationary state~\eqref{eq:lake}, let us pose the equations~\eqref{eq:stv} in a (polygonal) bounded domain $D\subset\R^2$, with wall boundary conditions:
\begin{equation}
 \label{eq:bcstv}
 \forall t>0, x\in \dv D, \quad (hU)(t,x)\bdot n(x) = 0,
\end{equation}
where $n$ is unit normal to $\dv D$, outward t $\Omega$. Moreover, still to obtain the uniqueness of the stationary state, we assume that the bottom $\alpha$ and the initial data $u_0$ comply with
\begin{equation}
\label{eq:V0}
 V_0=\int_D h_0(x) \ \dm x > \max_{D} \alpha(x) - \int_D \alpha(x) \ \dm x .
\end{equation}
In other words, the total volume of water is sufficient to avoid the appearance of dry areas (using the conservation law satisfied by $h$). Indeed the case of non uniqueness could appear with the occurrence of at least two disjoint lakes with possible different surface levels. 

Let us now detail the numerical scheme. We assume that the boundary conditions~\eqref{eq:bcstv} are approximated by the mirror technique: for each boundary cell, a fictitious symmetric cell is created outside the domain $D$, with the same height of water and bathymetry, and with an opposite velocity, see for instance \cite{torobook}. This method ensures the conservation of the height of water and a good approximation of~\eqref{eq:bcstv}.

% -----------------------------------------------------------------
\begin{corollary}[Corollary~\ref{cor:stdec} rephrased for shallow-water equations]
Consider a bathymetry $\alpha$ and an initial data which satisfy~\eqref{eq:V0}. Let $Z_0$ the associated stationary surface level, defined by
\begin{equation*}
Z_0 = \frac{1}{|D|} \int_D (h_0+\alpha) \ \dm x .
% \S = \Big\{ (v,a) \in \Omega\times\R; h+a = \frac{1}{|D|} \int_D (h_0+\alpha) \dm x , U=0 \Big\}.
\end{equation*}
The discrete bathymetry $(\alpha_K)_{K\in\T}$ being given by \eqref{eq:alFV}, the associated stationary state $(v_K)_{K\in\T}$ is uniquely defined by~\eqref{eq:lake}. Then, under the assumptions of corollary~\ref{cor:stdec} on the finite volume scheme~\eqrefs{eq:icFV}{eq:FV}, the associated approximate solution converges towards this stationary state, i.e.
\begin{equation}
 \label{eq:stvlim}
\begin{aligned}
 (h^n_K+\alpha_K)_K &\xrightarrow[n\to+\infty]{} Z_0, \\
 (U_K^n)_K &\xrightarrow[n\to+\infty]{} 0.
\end{aligned}
\end{equation}
\end{corollary}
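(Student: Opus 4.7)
The plan is to derive this corollary by verifying each hypothesis of Corollary~\ref{cor:stdec} in the particular shallow-water setting and then translating the resulting relative-entropy decay into the componentwise convergence~\eqref{eq:stvlim}. The natural choice of entropy vector is $H_0=(gZ_0,0,0)^\top$. With this choice, assumption~\ref{item:S1}, read off the explicit formula for $\dv_u\eta$ recalled in the shallow-water example, becomes $h+\alpha=Z_0$ and $U=0$, while \ref{item:S2} is automatic since the only nonzero components of $s_i=(0,gh)^\top$ sit in the velocity slots, which vanish in $H_0$. Hence the unique candidate stationary state is $v_K=(Z_0-\alpha_K,0)$, and it lies in $\Omega$ because the volume condition~\eqref{eq:V0} forces $Z_0>\max_D\alpha\ge\alpha_K$ for every $K$, so $h_K=Z_0-\alpha_K>0$.

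Next I would check the conservation compatibility~\eqref{eq:compu0v}. The only component with $s^{(k)}\equiv0$ is the height, and using the definitions of $\alpha_K$ in~\eqref{eq:alFV} and of $Z_0$,
\begin{equation*}
 \sum_{K\in\T}|K|\,(v_K)^{(1)}=\sum_{K\in\T}|K|(Z_0-\alpha_K)=|D|Z_0-\int_D\alpha\,\dm x=\int_D h_0\,\dm x=\sum_{K\in\T}|K|\,(u_K^0)^{(1)},
\end{equation*}
so the invariant on which the Lyapunov argument is anchored is preserved. The wall boundary conditions, implemented by the mirror technique, are compatible with this step since they impose zero mass flux at $\dv D$ and do not break the discrete entropy inequality used in Proposition~\ref{prop:numdiss}: the fictitious symmetric cells have the same $(h,\alpha)$ and opposite $U$, which is exactly a lake-at-rest continuation across the boundary.

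I would then invoke the corollary's hypothesis~\eqref{eq:vKuniq} (converse well-balancing), which is assumed of the scheme, to conclude that $(v_K)$ is the only fixed point of the numerical iteration with the prescribed mean water volume. Since the scheme satisfies~\ref{item:Fcons}--\ref{item:Fent} and is well-balanced for the lake-at-rest family, all the assumptions of Corollary~\ref{cor:stdec} are met, and under the CFL condition~\eqref{eq:CFLz} we obtain
\begin{equation*}
 \lim_{n\to\infty}\sum_{K\in\T}|K|\,h(u_K^n,v_K,\alpha_K)=0.
\end{equation*}
Finally, the quadratic lower bound $\underline\eta|u_K^n-v_K|^2\le h(u_K^n,v_K,\alpha_K)$ from Lemma~\ref{lem:h} upgrades this to $\sum_K|K|\,|u_K^n-v_K|^2\to0$; componentwise this yields $h_K^n\to Z_0-\alpha_K$ and $h_K^n U_K^n\to0$, and since the limit height $Z_0-\alpha_K$ is bounded below away from zero, $U_K^n\to0$, which is~\eqref{eq:stvlim}.

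The only genuinely delicate point is the interaction of the global mass constraint with the boundary conditions: one must be sure that the mirror technique preserves both the conservation identity and the entropy-dissipation inequality used in Corollary~\ref{cor:stdec}; everything else is a direct verification of hypotheses already encoded in the abstract theorem, so the main substantive step has been done upstream.
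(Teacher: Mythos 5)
Your proof is correct and follows exactly the route the paper intends: the corollary is stated without proof as a direct rephrasing of Corollary~\ref{cor:stdec}, and your verification of \ref{item:S1}--\ref{item:S2} with $H_0=(gZ_0,0,0)^\top$, of the mass compatibility \eqref{eq:compu0v}, and the passage from relative-entropy decay to componentwise convergence via Lemma~\ref{lem:h} is precisely the omitted checking. Your remark that the mirror boundary treatment must preserve both mass conservation and the discrete entropy inequality is a fair identification of the one point the paper glosses over.
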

% -----------------------------------------------------------------

% -----------------------------------------------------------------
\begin{remark}
 As far as the (entropy weak) solution of the shallow-water equations is considered, this \emph{asymptotic} stability for the ``lake at rest'' stationary state $(h+\alpha=Z_0,U=0)$ should fail. Indeed, different stationary states could exist but also time periodic non-dissipative smooth solutions. We can only obtain from theorem~\ref{thm:stab} the (non-asymptotic) stability of the stationary state $(h+\alpha=Z_0,U=0)$.
\end{remark}
% -----------------------------------------------------------------

% =================================================================
% Section. Fin
% =================================================================
\section{Some concluding remarks}
% =================================================================
\label{sec:conc}

In this work, we have been able to compare entropy weak solutions to some stationary solutions. This analysis holds independently of the space dimension, the definition of the non-conservative products, the hyperbolicity, and the smallness and the smoothness of the solutions. These advantages are due to the use of the relative entropy, see also for instance \cite{MR3519973} and references therein. However, this analysis does not apply to every stationary states of interest. For instance, one-dimensional stationary states with a non-zero discharge of the shallow-water equations are not included, as well as transonic steady shock waves in a nozzle.

Concerning the numerical part, many numerical well-balanced schemes are not entropy-stable. The analysis we provide fails in this case, but could be adapted if the discrete entropy inequalities can be obtained up to some error terms (which in general require some smoothness on $\alpha$; see for instance \cite{MR3454365}). Note that the well-balancing property is also crucial to deduce the inequality satisfied by the relative entropy \eqref{eq:numdissh}.

We present at the end an application of (asymptotic) stability for the numerical approximation. In the same way, other applications can be obtained, replacing for instance wall boundary conditions by periodic boundary conditions, or using systems presented in section \ref{sec:ex}. 

%%%%%%%%%%%%%%%%%%%%%%%%%%%%%%%%%%%%%%%%%%%%%%%%%%%%%%%%%%%%%%%%%%%%%%%
%%%%%%%%%%%%%%%%%%%%%%%%%%%%%%%%%%%%%%%%%%%%%%%%%%%%%%%%%%%%%%%%%%%%%%%
%%%%%%%%%%%%%%%%%%%%%%%%%%%%%%%%%%%%%%%%%%%%%%%%%%%%%%%%%%%%%%%%%%%%%%%

\bibliographystyle{plain}

\end{document}